\newtheorem{thm}{Theorem}
\newtheorem{defn}[thm]{Definition}
\newtheorem{prop}[thm]{Proposition}
\newtheorem{cor}[thm]{Corollary}
\newtheorem{lem}[thm]{Lemma}
\begin{document}

\title{Invariant Measures for Hybrid Stochastic Systems}

\author{Xavier Garcia\thanks{
Department of Mathematics, University of Minnesota - Twin Cities, Minneapolis, MN 55455, USA
(garci363@umn.edu). Research supported by DMS 0750986 and DMS 0502354} \and Jennifer Kunze\thanks{
Department of Mathematics, Saint Mary's College of Maryland, St Marys City, MD 20686, USA
($\mbox{jckunze@smcm.edu}$). Research supported by DMS 0750986 } \and Thomas Rudelius\thanks{
Department of Mathematics, Cornell University, Ithaca, NY 14850, USA
(twr27@cornell.edu). Research supported by DMS 0750986 } \and  Anthony Sanchez\thanks{
Department of Mathematics, Arizona State University, Tempe, AZ 85281, USA
($\mbox{Anthony.Sanchez.1@asu.edu}$). Research supported by DMS 0750986 and DMS 0502354 } \and  Sijing Shao\thanks{
Department of Mathematics, Iowa State University, Ames, IA 50014, USA
(sshao@iastate.edu). Research supported by Iowa State University } \and Emily Speranza\thanks{
Department of Mathematics, Carroll College, Helena, MT 59625, USA
($\mbox{esperanza@carroll.edu}$). Research supported by DMS 0750986 }
\and Chad Vidden\thanks{
Department of Mathematics, Iowa State University, Ames, IA 50014, USA
($\mbox{cvidden@iastate.edu}$).  Research supported by Iowa State University.}}
\date{\today}

\maketitle


\abstract
In this paper, we seek to understand the behavior of dynamical systems
that are perturbed by a parameter that changes discretely in time.
If we impose certain conditions, we can study certain embedded systems
within a hybrid system as time-homogeneous Markov processes. In particular,
we prove the existence of invariant measures for each embedded
system and relate the invariant measures for the various systems through
the flow. We calculate these invariant measures explicitly in several illustrative
examples.

\textit{\textbf{Keywords:} Dynamical Systems; Markov processes; Markov chains; stochastic modeling}

\section{Introduction}

An understanding of dynamical systems allows one to analyze the way processes evolve through time. Usually, such systems are given by differential equations that model real world phenomena. Unfortunately, these models are limited in that they cannot account for random events that may occur in application. These stochastic developments, however, may sometimes be modeled with Markov processes, and in particular with Markov chains. We can unite the two models in order to see how these dynamical systems behave with the perturbation induced by the Markov processes, creating a hybrid system consisting of the two components. Complicating matters, these hybrid systems can be described in either continuous or discrete time.

The focus of this paper is studying the way these hybrid systems behave as they evolve. We begin by defining limit sets for a dynamical system and stochastic processes. We next examine the limit sets of these hybrid systems and what happens as they approach the limit sets. Concurrently, we define invariant measures and prove their existence for hybrid systems while relating these measures to the flow. In addition, we supply examples with visuals that provide insight to the behavior of hybrid systems.


\section{The Stochastic Hybrid System}

In this section, we define a hybrid system.

\begin{defn}\label{timehomogeneous}
A Markov process $X_t$ is called time-homogeneous on $T$ if, for all $t_1, t_2, k \in T$ and for any sets $A_1,A_2\in S$,
$$P(X_{t_1+k} \in A_1 | X_{t_1} \in A_2) = P(X_{t_2+k} \in A_1 | X_{t_2} \in A_2) .$$ 
Otherwise, it is called time-inhomogeneous.
\end{defn}

\begin{defn}\label{markovchain2}
A Markov chain $X_n$ is a Markov process for which perturbations occur on a discrete time set $T$ and finite state space $S$.
\end{defn}

\noindent
For a Markov chain on the finite state space $S$ with cardinality $|S|$, it is useful to describe the probabilities of transitioning from one state to another with a transition matrix 
$$
Q \equiv \left ( \begin{array}{ccccc}
P_{1 \rightarrow 1} & .&.&.&P_{1 \rightarrow |S|} \\
.&&&&. \\
.&&&&. \\
.&&&&. \\
P_{|S| \rightarrow 1} & . & . & . & P_{|S| \rightarrow |S|}\\
\end{array} \right )
$$
where $P_{i \rightarrow j}$ is the probability of transitioning from state $s_i\in S$ to state $s_j\in S$.

Also, for the purposes of this paper, we suppose that our Markov chain transitions occur regularly at times $t=nh$ for some length of time $h\in\mathbb{R}^+$ and for all $n\in\mathbb{N}$.

\begin{defn}\label{markovchain}
Let $\{X_n\}$, for $X_n \in S$ and $ n \in \mathbb{N}$, be a sequence of states determined by a Markov chain.

For $t \in \mathbb{R}^+$, define the Markov chain perturbation $Z_t=X_{\left \lfloor\frac{t}{h} \right \rfloor}$, where $\left \lfloor \frac{t}{h} \right \rfloor$ is the greatest integer less than or equal to $\frac{t}{h}$.
\end{defn}

Note that $Z_t$, instead of being defined only on discrete time values like a Markov chain, is instead a stepwise function defined on continuous time.

\begin{defn}\label{varphi}
Given a metric space $M$ and state space $S$ as above, define a dynamical system $\varphi$ with random perturbation function $Z_t$, as given in Definition \ref{markovchain}, by
$$ \varphi: \mathbb{R}^+ \times M \times S \rightarrow M$$
with 
$$\varphi(t,x_0,Z_0) = \varphi_{Z_t}(t-nh, \varphi_{Z_{nh}}(h,  ... \varphi_{Z_{2h}}(h, \varphi_{Z_h}(h,\varphi_{Z_0}(h,x_0))))) $$
where $\varphi_{Z_k}$ represents the deterministic dynamical system $\varphi$ evaluated in state $Z_k$ and $nh$ is the largest multiple of $h$ less than $t$.
\end{defn}

\noindent
For ease of notation, let 
$$x_t=\varphi(t, x_0, Z_0)\in M$$ 
represent the position of the system at time $t$.

\begin{defn}\label{hybridsys}
Let 
$$ Y_t = \left ( \begin{array} {c}
x_t \\
Z_t
\end{array} \right )$$
define the hybrid system at time $t$.  In other words, the hybrid system consists of both a position $x_t=\varphi(t, x_0, Z_0) \in M$ and a state $Z_t \in S$.
\end{defn}

The $\omega$-limit set has the following generalization in a hybrid system.

\begin{defn}\label{C}
The stochastic limit set $C(x)$ for an element of our state space $x\in M$ and the hybrid system given above is the subset of $M$ with the following three properties:
\begin{enumerate}
\item Given $y\in M$ and $t_k\rightarrow\infty$ such that $x_{t_k}\rightarrow y$, $P(y\in C(x))=1$.
\item $C(x)$ is closed.
\item $C(x)$ is minimal: if some set $C'(x)$ has properties 1 and 2, then $C \subseteq C'$.
\end{enumerate}
\end{defn}


\section{The Hybrid System as a Markov Process}

\begin{lem}
Each of the following is a Markov process:
\\(i) any deterministic dynamical system $\varphi (t,x_0)$, as in Definition \ref{dynsys}.
\\(ii) any Markov chain perturbation $Z_t$, as in Definition \ref{markovchain}.
\\(iii) the corresponding hybrid system $Y_t$, as in Definition \ref{hybridsys}.
\end{lem}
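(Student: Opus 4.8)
The plan is to verify, for each of the three processes, the defining Markov property: for any finite collection of times $t_0 < t_1 < \cdots < t_n < t$, the conditional distribution of the future given the entire past must collapse to conditioning on the present alone. For (i), the deterministic system, I would exploit the flow (semigroup) property $\varphi(t, x_0) = \varphi(t-s, \varphi(s, x_0))$ guaranteed by Definition~\ref{dynsys}: given the present position $x_s = \varphi(s, x_0)$, the future position $x_t$ is the \emph{deterministic} image $\varphi(t-s, x_s)$, so the conditional law of $x_t$ is a point mass depending on $x_s$ only. Since this point mass ignores the earlier times $t_0, \ldots, t_{n-1}$, the Markov property holds trivially, a deterministic evolution being a degenerate Markov process.

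For (ii), I would reduce the continuous-time step process $Z_t = X_{\lfloor t/h \rfloor}$ to the underlying discrete chain $\{X_n\}$, which is Markov by Definition~\ref{markovchain2}. The key observation is that on each interval $[nh, (n+1)h)$ the process $Z_t$ is constant and equal to $X_n$, so knowing $Z_s$ is exactly knowing $X_{\lfloor s/h \rfloor}$. The future values $Z_t$ for $t > s$ are governed by the chain advancing from index $\lfloor s/h \rfloor$, and by the Markov property of $\{X_n\}$ this advance depends only on $X_{\lfloor s/h \rfloor} = Z_s$, not on the earlier indices. Translating the discrete-time Markov property through the floor map then yields the claim for $Z_t$; the only care needed is that several conditioning times may fall in the same interval and thus collapse to a single index.

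The substantive case is (iii), which I expect to be the main obstacle. The difficulty is that the Markov property of each coordinate does \emph{not} by itself guarantee the Markov property of the pair $Y_t = (x_t, Z_t)$; one must use the particular coupled structure of Definition~\ref{varphi}, in which the position is built by composing the deterministic flow through the successive states dictated by $Z$. The plan is to condition on $Y_s = (x_s, Z_s)$ and show that the distribution of the future pair is recovered from this data alone. First, the future states $Z_t$ for $t > s$ depend only on $Z_s$ by part (ii). Second, given those states together with the present position $x_s$, the future position $x_t$ is the deterministic composition of flows started from $x_s$, exactly as in the recursive formula of Definition~\ref{varphi}. Consequently the conditional law of $(x_t, Z_t)$ given the full past equals the conditional law given $Y_s$ alone, with all randomness entering through the Markovian evolution of $Z$. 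The step requiring the most care is the bookkeeping of this composition when $s$ and $t$ are not integer multiples of $h$, so that the partial flow over the fractional intervals (such as $t - nh$) is attributed to the correct state; once this is in place, the history beyond the present drops out and the Markov property for $Y_t$ follows.
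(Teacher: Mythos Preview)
Your proposal is correct and follows essentially the same strategy as the paper: determinism for (i), reduction to the underlying discrete chain through the floor map for (ii), and for (iii) the observation that the future of $Z$ depends only on $Z_s$ while, conditional on that future, the position is a deterministic composition of flows started from $x_s$. If anything your outline is more careful than the paper's, since you correctly note that marginal Markov properties do not automatically yield a joint one and you flag the fractional-interval bookkeeping as the residual technicality.
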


\begin{proof}  (i) Any deterministic system is trivially a Markov process, since $\varphi(t,x_0)$ is uniquely determined by $\varphi(\tau,x_0)$ at any single past time $\tau\in\mathbb{R}^+$.

(ii) By definition, a Markov chain is a Markov process.  However, the Markov chain perturbation $Z_t$ is not exactly a Markov chain.  A Markov chain exists on a discrete time set, in our case given by $T=\{t\in\mathbb{R}^+ | t=nh$ for some $n\in\mathbb{N}\}$; conversely, the time set of $Z_t$ is $\mathbb{R}^+$, with transitions between states ocurring on the previous time set (that is, at $t \equiv 0$ mod $h$).  Despite this difference, $Z_t$ maintains the Markov property: we can compute $P(Z_t \in A)$ for any set $A$ based solely on $Z_{\tau_1}$ and the values of the times $t$ and $\tau_1$.  Explicitly, the probability that $Z_t$ will be in state $s_i$ at time $t$ is given by 
$$P(Z_t = s_i) = \left( (Q^T)^n \right)_{ij}$$
where $n$ is the number of integer multiples of $h$ (i.e. the number of transitions that occur) between $t$ and $\tau_1$.  Clearly, this is independent of the states $Z_{\tau_i}$ for $i > 1$, so that the random perturbation is indeed a Markov process.

(iii)  Now, keeping in mind that the hybrid system $Y_t$ consists of both a location $x_t\in M$ in the state space and a value $Z_t\in S$ of the random component, we can combine (i) and (ii) to see that the entire system is also a Markov process.  We see from (ii) that $Z_t$ follows a Markov process.  Furthermore, $P(x_t \in A_x)$ at time $t$ depends solely on the location $x_{\tau_1}$ at any time $\tau_1<t$ and the states of the random perturbation sequence $Z$ between $t$ and $\tau_1$, regardless of any past behavior of the system. Hence, for any collection of sets $A_{\alpha}$, $\alpha\in\mathbb{N}$,
$$P(Z_t \in A_z | Z_{\tau_1} \in A_{z_1}, Z_{\tau_2} \in A_{z_2},...,Z_{\tau_n} \in A_{z_n}) = P(Z_t \in A_z | Z_{\tau_1} \in A_{z_1})$$
and
$$P(x_t \in A_x | x_{\tau_1} \in A_{x_1}, x_{\tau_2} \in A_{x_2},...,x_{\tau_n} \in A_{x_n}) = P(x_t \in A_x | x_{\tau_1} \in A_{x_1}).$$
So,
$$P(Y_t \in A_y | Y_{\tau_1} \in A_{y_1}, Y_{\tau_2} \in A_{y_2},...,Y_{\tau_n} \in A_{y_n}) = P(Y_t \in A_y | Y_{\tau_1} \in A_{y_1}).$$
Thus, the hybrid system is a Markov process.  
\end{proof}

Unfortunately, the hybrid system is not time-homogeneous.  Recall that state transitions of $Z_t$ occur at times $t=nh$ for $n\in\mathbb{N}$.  So, the state of the system at time $\frac{h}{4}$ uniquely determines the system at $\frac{3h}{4}$, since there is no transition in this interval.  However, the system at time $\frac{5h}{4}$ is not determined uniquely by the system at $\frac{3h}{4}$, since a stochastic transition occurs at $t = h \in [\frac{3}{4},\frac{5}{4}]$.  Therefore, with $t_1 = \frac{h}{4}$, $t_2 = \frac{3h}{4}$, and $k = \frac{1}{2}$, 
$$ P(Y_{\frac{h}{4}+ \frac{1}{2}}\in A | Y_\frac{h}{4} \in A_0) \neq P(Y_{\frac{3h}{4}+\frac{1}{2}} \in A | Y_\frac{3h}{4} \in A_0) ,$$
violating Definition \ref{timehomogeneous}.  However, in order to satisfy the hypotheses of the Krylov-Bogolyubov theorem \cite{Hairer, Ergodic} found in Theorem \ref{Krylov-Bogolyubov}, the hybrid system must be time-homogeneous.  

To create a time-homogeneous system, we restrict the time set on which our Markov process is defined.  Instead of allowing our time set $\{t, \tau_1, \tau_2, \tau_3, ... , \tau_n\} \subset \mathbb{R}^+$ to be any decreasing sequence of real numbers, we create time sets $t_0+nh$ for each $t_0\in [0,h)$ and $n\in\mathbb{N}$.  In other words, we define a different time set for each value $t_0<h$ with
\begin{center}$ \{t\in\mathbb{R}^+ | t = t_0+nh$ for some $n\in\mathbb{N}\} $ . \end{center}
We call the hybrid system on these multiple, restricted time sets the discrete system.

\begin{prop}\label{discretetomarkov}
The discrete hybrid system above is a time-homogeneous Markov process.
\end{prop}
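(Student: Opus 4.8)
The plan is to establish the two defining properties of a time-homogeneous Markov process separately: first that the discrete system retains the Markov property when restricted to a single time set of the form $\{t_0 + nh : n \in \mathbb{N}\}$, and second that on such a time set the transition probabilities depend only on the elapsed number of steps and not on the absolute time. The Markov property is essentially inherited from the preceding lemma, which already shows that the full hybrid system $Y_t$ is a Markov process on any decreasing sequence of real times; restricting attention to the arithmetic progression $t_0 + nh$ is merely a sub-collection of such times, so the conditional-probability identity $P(Y_t \in A \mid Y_{\tau_1}, \dots, Y_{\tau_n}) = P(Y_t \in A \mid Y_{\tau_1})$ continues to hold verbatim. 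I would state this inheritance explicitly and then turn to the genuinely new content, which is time-homogeneity.

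For time-homogeneity, I would fix a base point $t_0 \in [0,h)$ and consider two times $t_1 = t_0 + n_1 h$ and $t_2 = t_0 + n_2 h$ from the same restricted time set, together with an increment $k = m h$ that is itself a multiple of $h$ (so that $t_1 + k$ and $t_2 + k$ also lie in the time set). The key observation is that, because every point of the time set sits at the same fractional offset $t_0$ within its $h$-interval, advancing from $t_i$ to $t_i + k$ always sweeps across exactly $m$ stochastic transitions, occurring at the intervening multiples of $h$, and within each sub-interval the deterministic flow $\varphi$ is applied for the identical elapsed times. I would make this precise by unwinding Definition \ref{varphi}: the map carrying $Y_{t_i}$ to $Y_{t_i + k}$ is the same composition of $m$ flow-and-transition steps (an initial partial flow of length $h - t_0$, followed by $m-1$ full flows of length $h$ interleaved with Markov-chain transitions, closing with a partial flow of length $t_0$), regardless of $i$. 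Since the underlying Markov chain $X_n$ is itself time-homogeneous, each transition contributes the same matrix $Q$ irrespective of $n_i$, and the deterministic pieces are autonomous and hence depend only on elapsed time.

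Combining these, the transition kernel from $Y_{t_i}$ to $Y_{t_i + k}$ is a function of $k$ (equivalently, of $m$) and of the fixed offset $t_0$ alone, independent of $n_i$; this is exactly the identity $P(Y_{t_1 + k} \in A_1 \mid Y_{t_1} \in A_2) = P(Y_{t_2 + k} \in A_1 \mid Y_{t_2} \in A_2)$ required by Definition \ref{timehomogeneous}. I anticipate that the main obstacle is not any deep estimate but rather the bookkeeping in the previous paragraph: one must verify carefully that the fractional offsets at the source and target times match so that the partial-flow durations at the two ends of the composition agree between $t_1$ and $t_2$, and that $k$ being a multiple of $h$ is what guarantees this matching. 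The argument would break down precisely for the increments $k$ that are not multiples of $h$ — indeed the discussion preceding the proposition exhibits such a $k$ as the source of the failure of time-homogeneity for the unrestricted system — so I would emphasize that the restriction of the time set to a fixed offset $t_0$ is exactly what rescues the property.
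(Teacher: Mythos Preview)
Your proposal is correct and follows essentially the same approach as the paper: inherit the Markov property from the preceding lemma by restricting to the sub-collection of times $\{t_0 + nh\}$, and deduce time-homogeneity from the fact that any admissible increment is a multiple of $h$ and therefore spans exactly the same number of stochastic transitions regardless of the starting time. Your write-up is in fact more explicit than the paper's, which does not unwind the flow-and-transition composition in detail but simply asserts that an interval of length $nh$ ``will contain precisely $n$ potential transitions''; your partial-flow bookkeeping is a welcome sharpening of that step.
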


\begin{proof}  
First, we must show that the discrete hybrid system is a Markov process at all.  This follows immediately from the proof that our original hybrid system is a Markov process.  Since the Markov property (Definition \ref{markovprocess}) holds for all $t, \tau_1, \tau_2, ..., \tau_n \in \mathbb{R}^+$, it must necessarily hold for the specific time sets 
\begin{center} $\{t\in\mathbb{R}^+ | \exists n\in\mathbb{N}$ such that $t = t_0+nh\}$\end{center}
for each $t_0<h$.

Now, it remains to show that this system is time-homogeneous.  Recall that the time-continuous hybrid system failed to be time-homogeneous because its $Z_t$ component was not time-homogeneous.  Although transitions occurred only at regular, discrete time values, a test interval could be of any length; an interval of size $\frac{h}{2}$, for example, might contain either $0$ or $1$ transitions.  However, because our discrete system creates separate time sets, any time interval - starting and ending within the same time set - must be of length $nh$ for some $n\in\mathbb{N}$, and thus will contain precisely $n$ potential transitions.  So, taking $t_1,t_2\in\mathbb{R}^+$, we know
$$ P(Y_{t_1+nh} \in A | Y_{t_1} \in A_0) = P(Y_{t_2+nh} \in A | Y_{t_2} \in A_0) .$$
Note that the first component of the hybrid system, $x_t$, is also time-homogeneous under the discrete time system.  Given $Z_t$, it can be treated as a deterministic system, and therefore time-homogeneous.
Thus, the discrete hybrid system is time-homogeneous.
\end{proof}

\section{Invariant Measures for the Hybrid System}

We now introduce several definitions that will lead to the main results of this paper.

\begin{defn}\label{markovinvm}
Consider a hybrid system $Y_t$ and a $\sigma$-algebra $\Sigma$ on the space $M$.  A measure $\mu$ on $M$ is invariant if, for all sets $A\in\Sigma$ and all times $t\in\mathbb{R}^+$, 
$$\mu (A) = \int_{x_0 \in M} P(x_t\in A) \mu (dx) .$$
\end{defn}

\begin{defn}\label{tight}
 Let $(M, \mathcal{T})$ be a topological space, and let $\Sigma$ be a $\sigma$-algebra on $M$ that contains the topology $\mathcal{T}$. Let $\mathcal{M}$ be a collection of probability measures defined on $\Sigma$. The collection $\mathcal{M}$ is called tight if, for any $\epsilon > 0$, there is a compact subset $K_{\epsilon}$ of $M$ such that, for any measure  $\mu$ in $\mathcal{M}$, 
$$ \mu (M \backslash K_{\epsilon}) < \epsilon .$$
\end{defn}

\noindent
Note that, since $\mu$ is a probability measure, it is equivalent to say $\mu(K_{\epsilon}) > 1 - \epsilon$.

The following definitions are from \cite{Hairer}.

\begin{defn}\label{Prokhorov}
Let $(M,\rho)$ be a separable metric space. Let $\{\mathcal{P}(M)\}$ denote the collection of all probability measures defined on $M$ (with its Borel $\sigma$-algebra). A collection $K \subset \{\mathcal{P}(M)\}$ of probability measures is tight if and only if $K$ is sequentially compact in the space  equipped with the topology of weak convergence.
\end{defn}

\begin{defn}\label{markovoperator}
Consider $M$ with $\sigma$-algebra $\Sigma$.  Let $C^0(M,\mathbb{R})$ denote the set of continuous functions from $M$ to $\mathbb{R}$.  The probability measure $\mathcal{P}(t,x,\cdot)$ on $\Sigma$ induces a map
$$\mathcal{P}_t(x):C^0(M,\mathbb{R}) \rightarrow \mathbb{R}$$
with
$$\mathcal{P}_t(x)(f) = \int_{y \in M}{f(y)\mathcal{P}(t,x,dy)} .$$
$\mathcal{P}_t$ is called a Markov operator.
\end{defn}

\begin{defn}\label{Feller}
A Markov operator $\mathcal{P}$ is Feller if $\mathcal{P} \varphi$ is continuous for every continuous bounded function $\varphi: X \rightarrow R$. In other words, it is Feller if and only if the map $x \mapsto \mathcal{P}(x, \cdot)$ is continuous in the topology of weak convergence.
\end{defn}

We state the Krylov-Bogolyubov Theorem without proof.

\begin{thm}\label{Krylov-Bogolyubov}  
{\rm \textbf{(Krylov-Bogolyubov)}}
Let $\mathcal{P}$ be a Feller Markov operator over a complete and separable space X. Assume that there exists $x_0 \in X$ such that the sequence $\mathcal{P}^{n}(x_0, \cdot)$ is tight. Then, there exists at least one invariant probability measure for $\mathcal{P}$. 
\end{thm}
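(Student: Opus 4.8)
The plan is to realize an invariant measure as a weak limit of Cesàro (time) averages of the iterates, which is the classical Krylov--Bogolyubov construction. First I would form the empirical measures
$$\mu_N = \frac{1}{N}\sum_{n=0}^{N-1}\mathcal{P}^n(x_0,\cdot),$$
each a probability measure on $X$. Because $\{\mathcal{P}^n(x_0,\cdot)\}$ is assumed tight, the family $\{\mu_N\}$ is tight as well: for the compact set $K_\epsilon$ supplied by tightness one has $\mu_N(X\setminus K_\epsilon) = \frac{1}{N}\sum_{n=0}^{N-1}\mathcal{P}^n(x_0, X\setminus K_\epsilon) < \epsilon$ uniformly in $N$. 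Invoking Prokhorov's theorem (Definition \ref{Prokhorov}), tightness of $\{\mu_N\}$ yields a subsequence $\mu_{N_j}$ converging weakly to some probability measure $\mu$; here completeness and separability of $X$ are what guarantee that the limit exists and is a genuine probability measure, with no mass escaping to infinity.

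It then remains to show $\mu$ is invariant, that is $\mathcal{P}^*\mu=\mu$, where $(\mathcal{P}^*\mu)(A)=\int_X \mathcal{P}(x,A)\,\mu(dx)$. By the duality $\int_X (\mathcal{P}f)\,d\mu = \int_X f\,d(\mathcal{P}^*\mu)$ together with the fact that bounded continuous functions determine a Borel measure on a separable metric space, it suffices to verify that $\int_X (\mathcal{P}f)\,d\mu = \int_X f\,d\mu$ for every bounded continuous $f:X\to\mathbb{R}$, where $(\mathcal{P}f)(x)=\int_X f(y)\,\mathcal{P}(x,dy)$ is the one-step operator of Definition \ref{markovoperator}. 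The key computation is a telescoping identity. Using $\int_X (\mathcal{P}f)\,d\mathcal{P}^n(x_0,\cdot) = \int_X f\,d\mathcal{P}^{n+1}(x_0,\cdot)$ and summing, I would obtain
$$\int_X (\mathcal{P}f)\,d\mu_N - \int_X f\,d\mu_N = \frac{1}{N}\left(\int_X f\,d\mathcal{P}^{N}(x_0,\cdot) - f(x_0)\right),$$
whose right-hand side is bounded by $2\|f\|_\infty/N$ and hence tends to $0$ as $N\to\infty$, precisely because $f$ is bounded.

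Finally I would pass to the weak limit along the subsequence $N_j$. This is where the Feller property (Definition \ref{Feller}) is indispensable: it ensures that $\mathcal{P}f$ is again bounded and continuous whenever $f$ is, so that weak convergence $\mu_{N_j}\to\mu$ delivers both $\int_X f\,d\mu_{N_j}\to\int_X f\,d\mu$ and $\int_X (\mathcal{P}f)\,d\mu_{N_j}\to\int_X (\mathcal{P}f)\,d\mu$. Combining these limits with the vanishing of the telescoping term gives $\int_X (\mathcal{P}f)\,d\mu = \int_X f\,d\mu$ for all bounded continuous $f$, which is exactly the invariance of $\mu$.

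I expect the main obstacle to be this last step rather than the compactness argument. The telescoping estimate only controls the \emph{averaged} difference between $f$ and $\mathcal{P}f$, and it is the Feller hypothesis that converts this averaged control into a pointwise functional identity surviving the weak limit: without continuity of $\mathcal{P}f$, the functional $\nu\mapsto\int_X (\mathcal{P}f)\,d\nu$ need not be continuous under weak convergence, and the invariance relation could fail in the limit. Verifying carefully that the Feller property yields exactly the continuity needed to commute the limit with $\mathcal{P}$ is the substantive part of the proof; the tightness and Prokhorov input, by contrast, is essentially bookkeeping once the empirical measures are introduced.
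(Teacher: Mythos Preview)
Your argument is the standard Krylov--Bogolyubov construction and is correct as written: the tightness of the Ces\`aro averages, the extraction via Prokhorov, the telescoping identity, and the use of the Feller property to pass to the weak limit are all in order. Note, however, that the paper does not actually prove Theorem~\ref{Krylov-Bogolyubov}; it explicitly states the result without proof, deferring to the references \cite{Hairer, Ergodic}. Your proof is precisely the classical argument one finds in those lecture notes, so there is no discrepancy to discuss---you have supplied what the paper deliberately omitted.
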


\noindent
We now show that the conditions of the theorem are satisfied by the discrete hybrid system, yielding the existence of invariant measures as a corollary.

\begin{lem}\label{pFeller}
Given $t_0 \in [0,h)$, the discrete hybrid system Markov operators $\mathcal{P}_n$ for $n \in \mathbb{N}$ given by
$$\mathcal{P}_nf(Y) \equiv \int_{M \times S}{f(Y_1)\mathcal{P}(nh,Y,dY_1)}$$
are Feller.
\end{lem}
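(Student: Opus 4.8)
The plan is to verify the Feller property directly from Definition \ref{Feller}: I would show that $\mathcal{P}_n f$ is continuous on $M \times S$ whenever $f \in C^0(M \times S, \mathbb{R})$ is bounded. The guiding idea is that the finiteness of the state space $S$ collapses the transition measure $\mathcal{P}(nh, Y, \cdot)$ into a finite sum, and that the positional part of the dynamics is deterministic once a sequence of states is fixed.

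First I would write the kernel explicitly. Starting from $Y = (x_0, s_i)$, the position evolves under the flow $\varphi_{s_i}$ for time $h$, the state then transitions according to $Q$, and this is iterated $n$ times as in Definition \ref{varphi}. Enumerating the paths $s_i = s_{j_0} \to s_{j_1} \to \cdots \to s_{j_n}$ through $S$ (with $j_0 = i$ fixed), the position at time $nh$ along such a path is the deterministic composition
$$\Phi_{\vec{j}}(x_0) \equiv \varphi_{s_{j_{n-1}}}\!\big(h, \cdots \varphi_{s_{j_1}}(h, \varphi_{s_{j_0}}(h, x_0))\big),$$
and the path is realized with probability $w_{\vec{j}} \equiv \prod_{k=0}^{n-1} P_{s_{j_k} \to s_{j_{k+1}}}$. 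The crucial observation is that the weights $w_{\vec{j}}$ depend only on the initial state $s_i$, not on $x_0$. Hence the kernel is the finitely supported measure $\mathcal{P}(nh, (x_0, s_i), \cdot) = \sum_{\vec{j}} w_{\vec{j}}\, \delta_{(\Phi_{\vec{j}}(x_0),\, s_{j_n})}$, so that
$$\mathcal{P}_n f(x_0, s_i) = \sum_{\vec{j}} w_{\vec{j}}\, f\big(\Phi_{\vec{j}}(x_0),\, s_{j_n}\big),$$
a finite sum over paths.

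Next I would argue continuity. Since $S$ carries the discrete topology, $M \times S$ is a disjoint union of copies $M \times \{s_i\}$, and continuity on $M \times S$ reduces to continuity in $x_0$ on each copy. For fixed $s_i$, each summand is the constant $w_{\vec{j}}$ times $f(\Phi_{\vec{j}}(x_0),\, s_{j_n})$. Each flow map $\varphi_s(h, \cdot) : M \to M$ is continuous (this is built into the definition of the deterministic dynamical system, Definition \ref{dynsys}), so the composition $\Phi_{\vec{j}}$ is continuous, and composing with the continuous function $f$ keeps each summand continuous in $x_0$. A finite sum of continuous functions is continuous, so $\mathcal{P}_n f$ is continuous, establishing the Feller property.

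The argument is essentially bookkeeping, so I do not anticipate a deep obstacle; the one point requiring care is the correct expansion of the $n$-step kernel as a finite sum of Dirac masses, together with the observation that the path weights are independent of the position $x_0$. This independence is exactly what guarantees that, as $x_0^{(k)} \to x_0$, the kernels converge weakly — the atoms move continuously while their masses stay fixed — which is the weak-convergence formulation of the Feller property in Definition \ref{Feller}. The only external input is continuity of the flow $\varphi$, which I would invoke from Definition \ref{dynsys}.
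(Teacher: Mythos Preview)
Your approach is correct and essentially the same as the paper's: both exploit the finiteness of $S$ to collapse the transition kernel into a finite sum of point masses at deterministically flowed positions, then invoke continuity of each $\varphi_s$ to conclude that $\mathcal{P}_n f$ is a finite sum of continuous functions. The paper handles only $\mathcal{P}_1$ explicitly and appeals to induction for general $n$, whereas you expand $\mathcal{P}_n$ directly over $n$-step paths; one small correction is that for general $t_0 \in [0,h)$ the one-step map should be $x \mapsto \varphi_{s_j}\big(t_0,\, \varphi_{s_i}(h-t_0, x)\big)$ (since the Markov transition occurs at time $h$, not at $t_0+h$), but this does not affect your continuity argument.
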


\begin{proof}
We begin by showing that $\mathcal{P}_1$ is Feller.  By induction, it follows that $\mathcal{P}_n$ is Feller for all $n\in\mathbb{N}$.  It is clear that there are only finitely many possible outcomes of running the hybrid system for time $h$.  Namely, there are at most $|S|$ possible outcomes, where $|S|$ denotes the cardinality of $S$.  Given 
$$Y_0 = 
\left ( \begin{array} {c}
x_0\\
Z_0=s_i
\end{array} \right )\in M \times S ,$$
the only possible outcomes at time $t=1$ are
$$ Y_1^j = \left ( \begin{array} {c}
\varphi_j(t_0,\varphi_i(h-t_0, x)) \\
s_j
\end{array} \right )$$
for $j \in \{1,...,|S|\}$ where $\varphi_i, \varphi_j$ are the flows of the dynamical systems corresponding to states $s_i$ and $s_j$, respectively.  The probability of the $j^{th}$ outcome is given by
$ P_{i \rightarrow j}$, the probability of transitioning from state $s_i$ to state $s_j$.  Therefore,
$$\mathcal{P}_1f(Y) = \int_{M \times S}{f(Y_1)\mathcal{P}(h,Y,dY_1)} = \sum_{j = 1}^{|S|}{P_{i \rightarrow j}f(Y_1^j)} $$

Each $\varphi_i$ is continuous under the assumption that each flow is continuous with respect to its initial conditions.  The map from $s_i$ to $s_j$ is continuous since $S$ is finite, so every set is open and hence the inverse image of any open set is open.  The function $f$ is continuous by hypothesis, and any finite sum of continuous functions is also continuous.  Therefore $\mathcal{P}_1f$ is also continuous, and hence $\mathcal{P}_1$ is Feller.

\end{proof}

We see now that the conditions of the Krylov-Bogolyubov Theorem (\ref{Krylov-Bogolyubov}) hold. Namely, because $M$ and $S$ are compact (the former by assumption, the latter since it is finite), $M \times S$ is compact. Thus, any collection of measures is automatically tight, since we can take $K_{\epsilon} = X$. It is well-known that any compact metric space is also complete and separable.  Applying Theorem \ref{Krylov-Bogolyubov}, then, gives the following corollary, which is one of the primary results of the paper.

\begin{cor}\label{discretemeasure}
The discrete hybrid system has an invariant measure for each $t_0 \in [0, h)$.
\end{cor}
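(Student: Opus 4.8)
The plan is to apply the Krylov-Bogolyubov Theorem (Theorem \ref{Krylov-Bogolyubov}) directly to the one-step Markov operator $\mathcal{P}_1$ of the discrete hybrid system, for each fixed $t_0 \in [0,h)$. To invoke the theorem I must verify its three hypotheses: that the state space is complete and separable, that the operator is Feller, and that there is a point whose sequence of pushed-forward measures is tight.

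First I would fix $t_0 \in [0,h)$ and pin down the correct state space and operator. By Proposition \ref{discretetomarkov}, the discrete hybrid system restricted to the time set $\{t_0 + nh\}$ is a time-homogeneous Markov process on $M \times S$; time-homogeneity together with the Chapman-Kolmogorov relation gives $\mathcal{P}_n = \mathcal{P}_1^{\,n}$, so that the iterates of the single operator $\mathcal{P}_1$ generate the whole semigroup. This reduces the problem to checking the hypotheses of Theorem \ref{Krylov-Bogolyubov} for $\mathcal{P} := \mathcal{P}_1$.

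Next I would dispose of the topological hypotheses. The Feller property is precisely the content of Lemma \ref{pFeller} (taking $n = 1$). For completeness and separability, note that $M$ is compact by assumption and $S$ is finite, hence compact; therefore $M \times S$ is compact, and every compact metric space is both complete and separable. Finally, for tightness, compactness of $M \times S$ lets me take $K_\epsilon = M \times S$ for every $\epsilon > 0$ in Definition \ref{tight}, so that any family of probability measures --- in particular the sequence $\{\mathcal{P}^n(x_0,\cdot)\}$ for an arbitrarily chosen starting point $x_0 \in M \times S$ --- is automatically tight.

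With all three hypotheses verified, Theorem \ref{Krylov-Bogolyubov} yields at least one invariant probability measure for $\mathcal{P}_1$, and since $t_0 \in [0,h)$ was arbitrary this produces such a measure for each $t_0$, as claimed. I do not expect a genuine obstacle here: the statement is a corollary, and all of the analytic work has been front-loaded into Lemma \ref{pFeller} and Proposition \ref{discretetomarkov}. The only point requiring care is the bookkeeping that identifies the family $\{\mathcal{P}_n\}$ with the iterates of the single operator $\mathcal{P}_1$, since Krylov-Bogolyubov is stated for the powers $\mathcal{P}^n$ of one Feller operator --- and this is exactly where the time-homogeneity established in Proposition \ref{discretetomarkov} is used.
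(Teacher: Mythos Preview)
Your proposal is correct and follows essentially the same route as the paper: verify the Feller property via Lemma~\ref{pFeller}, observe that $M\times S$ is compact (hence complete, separable, and automatically tight with $K_\epsilon = M\times S$), and invoke Theorem~\ref{Krylov-Bogolyubov}. If anything you are slightly more careful than the paper, which does not explicitly spell out the step $\mathcal{P}_n = \mathcal{P}_1^{\,n}$ from time-homogeneity that justifies applying Krylov--Bogolyubov as stated.
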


So, rather than speaking of an invariant measure for the time-continuous hybrid system, we can instead imagine a periodic invariant measure cycling continuously through $h$.  That is, for each time $t_0 \in [0,h)$, there exists a measure $\mu_{t_0}$ such that for $t \equiv 0$ (mod $h$), 
$$\mu_{t_0} (A) = \int_{Y \in M \times S} \mathcal{P}(t,Y,A) d\mu_{t_0} .$$
The measure $\mu_{t_0}$ above is a measure on the product space $M \times S$, since this is where the hybrid system lives.  However, what we are really after is an invariant measure on just $M$, the space where the dynamical system part of the hybrid system lives.  Fortunately, we can define a measure on $M$ by the following construction.

\begin{prop}\label{inducedmeasure}
Given $\mu_t$, an invariant probability measure on $M \times S$, the function 
$$
\tilde{\mu}_t(A) \equiv \mu_t(A,S)
$$
where $A \subseteq M$ is an invariant probability measure on $M$.
\end{prop}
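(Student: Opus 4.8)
The plan is to establish two things: that $\tilde{\mu}_t$ is a genuine probability measure on $M$, and that it satisfies the invariance relation of Definition \ref{markovinvm}. The first part is routine. I would observe that $\tilde{\mu}_t$ is precisely the pushforward of $\mu_t$ under the projection $\pi : M \times S \to M$, $\pi(x,s) = x$; equivalently, it is the $M$-marginal of $\mu_t$. Because $S$ is finite (hence every subset is Borel), $A \times S$ is measurable whenever $A \subseteq M$ is, so $\tilde{\mu}_t$ is well-defined. Countable additivity of $\tilde{\mu}_t$ follows from that of $\mu_t$ together with the identity $\left(\bigsqcup_k A_k\right) \times S = \bigsqcup_k (A_k \times S)$, and it is a probability measure since $\tilde{\mu}_t(M) = \mu_t(M \times S) = 1$.

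The heart of the argument is verifying invariance, and the key step is to apply the invariance of $\mu_t$ on $M \times S$ to the cylinder set $A \times S$. Since $\mu_t$ is invariant for the hybrid Markov operator, I would write
\[
\tilde{\mu}_t(A) = \mu_t(A \times S) = \int_{M \times S} \mathcal{P}(t,(x,s),A \times S)\, d\mu_t(x,s).
\]
The next step is the observation that $\mathcal{P}(t,(x,s),A \times S)$ is exactly the probability that the position component lands in $A$ at time $t$, irrespective of the terminal state; that is, $\mathcal{P}(t,(x,s),A \times S) = P(x_t \in A \mid x_0 = x,\, Z_0 = s)$. Substituting this into the display above yields the invariance identity for $\tilde{\mu}_t$ in the form required by Definition \ref{markovinvm}, with the starting datum distributed according to the invariant law $\mu_t$.

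The main obstacle, and the point requiring genuine care, is that the position process $x_t$ on $M$ is \emph{not} itself Markovian: its evolution depends on the hidden chain $Z_t$, so one cannot naively write down a Markov operator on $M$ alone. Consequently the quantity $P(x_t \in A)$ in Definition \ref{markovinvm} must be interpreted as conditioned on the full initial state $(x_0, Z_0)$, with the $S$-coordinate integrated out against the joint law supplied by $\mu_t$ itself. I would therefore be explicit that the invariance of $\tilde{\mu}_t$ is inherited from the joint dynamics, and that it is marginalization over $S$ using the \emph{specific} invariant coupling $\mu_t$ — rather than an arbitrary product measure — that makes the identity hold. The remaining points (measurability of the cylinder sets and the fact that testing against sets of the form $A \times S$ suffices to characterize the $M$-marginal) are standard and I would dispatch them briefly.
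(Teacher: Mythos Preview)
Your argument for the probability-measure part is essentially the paper's: both of you derive countable additivity and total mass $1$ for $\tilde{\mu}_t$ directly from the corresponding properties of $\mu_t$, with your pushforward/marginal language being a cleaner packaging of the same observation.

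Where you diverge is on invariance. The paper's proof stops after showing $\tilde{\mu}_t$ is a probability measure and never actually verifies the invariance identity of Definition~\ref{markovinvm}; the word ``invariant'' in the statement is left unproven there. Your proposal fills this gap by testing the invariance of $\mu_t$ on cylinder sets $A \times S$ and recognizing $\mathcal{P}(t,(x,s),A\times S)$ as $P(x_t\in A\mid x_0=x,\,Z_0=s)$. You also correctly flag the genuine subtlety that $x_t$ alone is not Markov on $M$, so Definition~\ref{markovinvm} must be read with the initial state coordinate integrated out against the joint law $\mu_t$ rather than against an autonomous transition kernel on $M$. This is an honest reading of an ambiguously stated definition, and your argument is sound under it. In short: your proof is correct and strictly more complete than the paper's, which handles only the measure-theoretic half of the claim.
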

\begin{proof}
The fact that $\tilde{\mu}_t$ is a probability measure follows almost immediately from the fact that $\mu_t$ is a probability measure.  The probability that $x_t \in \emptyset$ is 0, so $\tilde{\mu}_t(\emptyset) = 0$.  The probability that $x_t \in M$ is $1$, so $\tilde{\mu}_t(M) = 1$.  Countable additivity of $\tilde{\mu}_t$ follows from countable additivity of $\mu_t$.  Therefore, $\tilde{\mu}_t$ is a probability measure on $M$.
\end{proof}

Thus far, we have proven the existence of a measure $\mu_{t_0}$ for $t_0 \in [0,h)$ such that for $t \equiv 0$ (mod $h$), 
$$\mu_{t_0} (A) = \int_{x_0 \in M, s \in S} P(\varphi(t,x_0,s)\in A) d\mu_{t_0} .$$

\noindent The following theorem relates the collection of invariant measures $\{ \tilde{\mu}_{t_0} \}$ using the flow $\varphi$. This is the main result of the paper.

\begin{thm}\label{relatedmeasures}
Given invariant measure $\mu_0$, the measure $\mu_{t}$ defined by 
$$\mu_t(A) = \displaystyle \sum_{s \in S}{\int_{x_0 \in M}{P(\varphi(t,x_0,s)\in A)d\mu_0}}$$
is also invariant in the sense that $\mu_{t} = \mu_{t+nh}$ for $n \in \mathbb{N}$.
\end{thm}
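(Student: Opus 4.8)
The plan is to lift the definition of $\mu_t$ from $M$ to the product space $M \times S$ on which the hybrid system actually evolves, prove the periodicity there, and then descend to $M$ by taking marginals. Write $\mathcal{P}(t,Y,\cdot)$ for the transition kernel of the hybrid Markov process started from $Y = (x_0,s) \in M \times S$ at a time that is an integer multiple of $h$, and set
$$\nu_t(B) = \int_{M \times S} \mathcal{P}(t,Y,B)\, d\mu_0(Y)$$
for Borel $B \subseteq M \times S$. Since $S$ is finite, $\int_{M \times S}$ is literally $\sum_{s \in S} \int_{x_0 \in M}$, so the $\mu_t(A)$ of the statement is exactly the marginal $\nu_t(A \times S)$. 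It therefore suffices to prove $\nu_{t+nh} = \nu_t$ as measures on $M \times S$; equality of the $M$-marginals then yields $\mu_{t+nh} = \mu_t$ and, via Proposition \ref{inducedmeasure}, the claim.

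Next I would assemble two ingredients from the earlier results. The first is a Chapman--Kolmogorov decomposition coming from the Markov property of the hybrid system: evolving for duration $t + nh$ can be split into an initial leg of duration $nh$ followed by a leg of duration $t$, giving
$$\mathcal{P}(t+nh, Y, B) = \int_{M \times S} \mathcal{P}^{nh}(t, Y', B)\, \mathcal{P}(nh, Y, dY'),$$
where $\mathcal{P}^{nh}(t, Y', \cdot)$ denotes the duration-$t$ kernel whose evolution begins at time $nh$. The crucial point is that the second leg starts at $nh$, an integer multiple of $h$, so its phase relative to the transition lattice is identical to that of an evolution started at time $0$; by the time-homogeneity of the discrete system on the lattice $\{nh : n \in \mathbb{N}\}$ established in Proposition \ref{discretetomarkov}, we may replace $\mathcal{P}^{nh}(t, \cdot, \cdot)$ by $\mathcal{P}(t, \cdot, \cdot)$. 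The second ingredient is the invariance of $\mu_0$ under the time-$nh$ transition, i.e. $\int_{M \times S} \mathcal{P}(nh, Y, \cdot)\, d\mu_0(Y) = \mu_0(\cdot)$, which is Corollary \ref{discretemeasure} at $t_0 = 0$ restated in operator form as $\mu_0 \mathcal{P}_n = \mu_0$.

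With these in hand the computation is short. Substituting the decomposition into $\nu_{t+nh}$ and interchanging the order of integration by Tonelli's theorem --- valid since the kernels are nonnegative and $\mu_0$ is a probability measure --- gives
$$\nu_{t+nh}(B) = \int_{M \times S} \mathcal{P}(t, Y', B) \left( \int_{M \times S} \mathcal{P}(nh, Y, dY')\, d\mu_0(Y) \right) = \int_{M \times S} \mathcal{P}(t, Y', B)\, d\mu_0(Y') = \nu_t(B),$$
where the middle equality uses the invariance $\mu_0 \mathcal{P}_n = \mu_0$. Restricting to $B = A \times S$ completes the argument. The main obstacle is the phase-alignment step: rigorously justifying that the duration-$t$ kernel launched at time $nh$ coincides with the one launched at time $0$. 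This is exactly the subtlety that forced the introduction of the discrete system, and it is where the time-homogeneity of Proposition \ref{discretetomarkov} must be invoked carefully; the accompanying measurability of $Y' \mapsto \mathcal{P}(t, Y', B)$ and the Tonelli interchange are routine by comparison.
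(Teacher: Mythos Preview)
Your argument is correct and follows essentially the same route as the paper: a Chapman--Kolmogorov decomposition of the evolution into an initial leg of length a multiple of $h$ followed by a leg of length $t$, combined with the invariance of $\mu_0$ under the time-$h$ transition. The paper splits $t+h$ as $h$ then $t$ and inducts, while you split $t+nh$ as $nh$ then $t$ in one stroke and work on $M\times S$ before taking the marginal; these are cosmetic differences, and your explicit invocation of the phase-alignment/time-homogeneity issue is if anything cleaner than the paper's treatment.
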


\begin{proof}
We will show that $\mu_t = \mu_{t+h}$.  By induction, this implies that $\mu_t = \mu_{t+nh}$ for all $n \in \mathbb{N}$.  We have
$$\mu_{t+h}(A) = \sum_{s \in S}{\int_{x_0 \in M}{P(\varphi(t+h,x_0,s)\in A)d\mu_0}} .$$
Applying the definition of conditional probability,
$$\sum_{s \in S}{\int_{x_0 \in M}{P(\varphi(t+h,x_0,s) \in A)d\mu_0}} =  $$
$$\sum_{r \in S}{\int_{y \in M}{\left [P(\varphi(t,y,r) \in A) \sum_{s \in S}{\int_{x_0 \in M}{P(\varphi(h,x_0,s)\in dy \times \{r\})d\mu_0}}\right ]}} .$$
Loosely speaking, the probability that a trajectory beginning at $(x,s)$ will end in a set $A$ after a time $t+h$ is the product of the probability that a trajectory beginning at $(y,r)$ will end in $A$ after a time $t$ multiplied by the probability that a trajectory beginning at $(x,s)$ will end at $(y,r)$ after a time $h$, integrating over all possible pairs $(y,r)$.  Here, we have implicitly used the fact that the hybrid system is a Markov process to ensure that the state of the system at time $t+h$ given the state at time $h$ is independent of the initial state, and we have avoided the problem of time-inhomogeneity by considering trajectories that only begin at times congruent to $0$ (mod $h$).

Furthermore, we have that
$$\mu_h(dy \times \{r\}) =  \sum_{s \in S}{\int_{x_0 \in M}{P(\varphi(h,x_0,s)\in dy \times \{r\})d\mu_0}}$$
and
$$\mu_h(dy \times \{r\}) = d\mu_h(y,r); $$
so,
$$\mu_{t+h} (A) =  \sum_{r \in S}{\int_{y \in M}{P(\varphi(t,y,r)\in A)d\mu_h}} .$$
Since $\mu_0$ is invariant by assumption, $\mu_0 = \mu_h$.  Therefore,
$$\mu_{t+h} (A) =  \sum_{r \in S}{\int_{y \in M}{P(\varphi(t,y,r)\in A)d\mu_0}} = \mu_{t} (A) .$$
\end{proof}


\section{Examples}

Some examples of hybrid systems can be found in \cite{fitzhughnagurno, climatemodel}.  Here, we will examine two simple cases to illustrate the theory developed above.  

\subsection{A 1-D Hybrid System}

We begin with a 1-dimensional linear dynamical system with a stochastic perturbation:
$$\dot{x} = -x + Z_t $$
where $Z_t \in \{-1,1\}$.  Both components of this system have a single, attractive equilibrium point: for $Z_t=1$, this is $x=1$, and for $Z_t=-1$, $x=-1$.  At timesteps of length $h=1$, $Z_t$ is perturbed by a Markov chain given by the transition matrix $Q$.  $Q$ is therefore a $2 \times 2$ matrix of nonnegative entries, 
$$ Q = \left ( \begin{array}{cc}
P_{1 \rightarrow 1} & P_{1 \rightarrow -1} \\
P_{-1 \rightarrow 1} & P_{-1 \rightarrow -1} \\
\end{array} \right ) ,$$
where $P_{i \rightarrow j}$ gives the probability of the equilibrium point transitioning from $i$ to $j$ at each integer time step.  Since the total probability measure must equal $1$, 
$$ \sum_{j}{P_{i \rightarrow j}} = 1, \, \, \, i, j \in \{1,-1\} .$$
Furthermore, to avoid the deterministic case, we take $P_{i \rightarrow j} \neq 0$ for all $i,j$.

\begin{prop}\label{C2}
The stochastic limit set $C(x_0) = [-1,1]$ for all $x_0\in\mathbb{R}$.
\end{prop}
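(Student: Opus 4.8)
The plan is to establish the two inclusions $C(x_0)\subseteq[-1,1]$ and $[-1,1]\subseteq C(x_0)$ separately. For the first, I would use the explicit flow: in state $s\in\{-1,1\}$ one solves $\dot x=-x+s$ to get $\varphi_s(t,a)=s+(a-s)e^{-t}$, so that for \emph{any} realization of $Z_t$ the bounds $-x_t-1\le\dot x_t\le -x_t+1$ integrate to $-1+(x_0+1)e^{-t}\le x_t\le 1+(x_0-1)e^{-t}$ for all $t$. Hence $\liminf_{t\to\infty}x_t\ge -1$ and $\limsup_{t\to\infty}x_t\le 1$ surely, so every accumulation point of the trajectory lies in the closed set $[-1,1]$; by minimality of the stochastic limit set (property 3 of Definition \ref{C}) this yields $C(x_0)\subseteq[-1,1]$.

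The substance is the reverse inclusion, and the key observation is that the integer-time iterates alone do \emph{not} fill $[-1,1]$. Writing $f_s(a)=\varphi_s(1,a)=s+(a-s)e^{-1}$, one checks $f_1([-1,1])=[1-2e^{-1},1]$ and $f_{-1}([-1,1])=[-1,2e^{-1}-1]$, leaving the gap $(2e^{-1}-1,\,1-2e^{-1})$ uncovered; thus the attractor of the discrete-time dynamics is a Cantor-type set, and it is the continuity of the flow \emph{within} each timestep that fills the interval. Since $f_1^n(a)\nearrow 1$ for every $a<1$, forcing the state to equal $+1$ produces a continuous increasing trajectory that sweeps across all of $[a,1)$, and symmetrically forcing $-1$ sweeps across $(-1,a]$. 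Therefore, to approach a target $y\in(-1,1)$, I would use a two-phase forcing: first force enough $-1$-steps to drive the position below $y$, then force $+1$-steps, so that the resulting continuous trajectory rises from below $y$ toward $1$ and crosses $y$ exactly.

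The decisive feature is that the length $N=N(y)$ of this forcing can be chosen uniformly over all starting positions in a fixed compact interval: a fixed number $N_1$ of $-1$-steps sends any such $a$ below $-1+2e^{-N_1}<y$, after which $N_2$ further $+1$-steps guarantee the crossing because $f_1^{N_2}(-1)\ge y$. Since all transition probabilities satisfy $P_{i\to j}\ge p_{\min}>0$, this block of states has probability at least $p_{\min}^{N}>0$ regardless of the configuration at the block's start; as the comparison bounds confine the trajectory to a fixed bounded interval for all large $t$, we may begin the argument once the trajectory has entered it. Partitioning time into consecutive blocks and letting $A_m$ be the event that the trajectory comes within $\epsilon$ of $y$ during block $m$, the Markov property gives $P(A_m\mid\mathcal F_{(m-1)N})\ge p_{\min}^{N}$ almost surely; a conditional Borel--Cantelli argument then forces $A_m$ infinitely often almost surely. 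Intersecting over $\epsilon=1/l\to 0$ produces times $t_k\to\infty$ with $x_{t_k}\to y$, so $y\in C(x_0)$ almost surely, and fixing a countable dense $D\subset(-1,1)$ shows almost surely that every point of $D$ is an accumulation point; closedness of $C(x_0)$ then gives $C(x_0)\supseteq\overline{D}=[-1,1]$, and equality follows.

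I expect the main obstacle to be precisely this uniformity: arranging a single fixed-length forcing sequence that works from every position in the invariant interval, so that the hitting probabilities are bounded below independently of the random current configuration and can feed the conditional Borel--Cantelli step. The conceptual crux enabling it is the recognition that the discrete-time maps contract onto a Cantor set, so that the claimed density on all of $[-1,1]$ must come entirely from the continuous sweep performed within individual timesteps.
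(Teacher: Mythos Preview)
Your proof is correct and follows the same overall architecture as the paper's: confine the trajectory to $[-1,1]$, then for each target point produce a forcing sequence of states that makes the continuous trajectory sweep through it with probability bounded below uniformly, and repeat in blocks. The paper's execution is a bit simpler in the details. Rather than a two-phase down-then-up forcing, it assumes without loss of generality that the current position lies above the target $x^*$ (and is bounded by $1$, since the trajectory has already entered $[-1,1]$) and computes the number $k$ of consecutive $-1$-steps needed to decay from $1$ past $x^*$; the bound
\[
P\bigl(\exists\,t^*\in[0,k]:\ x_{t^*}=x^*\bigr)\ \ge\ \min\bigl(P_{1\to-1}P_{-1\to-1}^{\,k-1},\ P_{-1\to-1}^{\,k}\bigr)>0
\]
then gives $P(\text{no hit in }[0,mk])\le(1-P(x^*))^m\to 0$, and iterating from the exact hitting time yields infinitely many hits. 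Your route via $\epsilon$-neighborhoods, conditional Borel--Cantelli, and a countable dense subset of $(-1,1)$ is more careful---it makes the uniformity over starting positions and the Markov conditioning explicit where the paper is somewhat informal---and your two-phase forcing neatly avoids the above/below case split. The Cantor-set observation about the integer-time iterates is a nice conceptual aside but is not actually load-bearing in either argument; the paper simply uses the continuous sweep without remarking on what the discrete maps alone would produce.
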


\begin{proof} 
We begin by showing that $C(x)\subset[-1,1]$: that is, that every possible trajectory in our system will eventually enter and never leave $[-1,1]$, meaning that no it is only possible to have $t^*\rightarrow\infty$ such that $x^*=y$ for $y\in[-1,1]$.
First, consider $x_0\in[-1,1]$.  If we are in state $Z_t=1$, then the trajectory is attracted upwards and bounded above by $x=1$; in state $Z_t=-1$, the trajectory is attracted downwards and bounded below by $x=-1$.  In both cases, the trajectory cannot move above $1$ or below $-1$, and so will remain in $[-1,1]$ for all time.  

Now, consider $x_0\notin[-1,1]$.  If the trajectory ever enters $[-1,1]$, by similar argument as above, it will remain in that region for all time.  So, it remains to show that $\varphi(t,x_0,Z_0)\in[-1,1]$ for some $t\in\mathbb{R}$.  First, take $x_0>1$.  In either state, the trajectory will be attracted downward, and will eventually enter $[1,2]$ at time $t_2$.  Once there, at the first timestep in which $Z_t=-1$ it will cross $x=1$ and enter $[-1,1]$.  And since we have taken all entries of the transition probability matrix $Q$ to be nonzero, there almost surely exists a time $t_3>t_2$ for which the state is $Z_t=-1$; then, the trajectory will enter $[-1,1]$ and never leave.  By similar argument, any trajectory starting at $x_0<-1$ will enter and never leave $[-1,1]$.  Thus, $C(x)\subset[-1,1]$.

Now, we must show that $[-1,1]\in C(x)$: that is, that for every trajectory $\varphi(t,x_0,Z_0)$ and every point $y\in[-1,1]$ there is $t^*\rightarrow\infty$ such that $\varphi(t^*,x_0,Z_0)\rightarrow y$.
To do this, we really only need to show that given any point $x_0 \in [-1,1]$ and any transition matrix $Q$, there almost surely exists some time $t^*$ with $\varphi(t^*,x_0,Z_0) = x^*$.  If one such time $t^*$ is guaranteed to exist, then we can iterate the process for a solution beginning at $(t^*, x^*)$ to produce an infinite sequence of times.  To show that $t^*$ exists, we calculate a lower bound on the probability that $\varphi(t_n,x_0,Z_0) = x^*$.  

Without loss of generality, suppose that $x_0 > x^*$.  We have already shown that any solution will enter $[-1,1]$, so take sup$(x_0) = 1$.  From here, we can calculate the minimum number of necessary consecutive periods, $k$, for which $Z_n = -1$ in order for a solution with $x_0 = 1$ to decay to $x^*$.  The probability of this sequence of $k$ consecutive periods occurring is given by 
$$ P_{1}(k) = (P_{1 \rightarrow -1})(P_{-1 \rightarrow -1})^{k-1}$$
if $Z_0 = 1$ and 
$$P_{-1}(k) = (P_{-1 \rightarrow -1})^k$$
if $Z_0 = -1$.  Thus, for some $t^* \in [0, k]$,
$$P(\varphi(t^*,x_0,Z_0)=x^*) \geq \mbox{min}(P_{1}(k), P_{-1}(k)) > 0 $$ 
since $P_{i \rightarrow j} > 0$.  So,
$$ P(t^* \notin [0,k]) \leq 1 - P(x^*) < 1 $$  
and
$$ P(t^* \notin [0,mk]) \leq (1 - P(x^*))^m .$$  
As $m \rightarrow \infty$, $(1 - P(x^*))^m \rightarrow 0$.  So, with probability $1$, there exists $t^*$ with $\varphi(t^*,x_0,Z_0) = x^*$.

By similar argument, for $x_0<x^*$ and all $x^*\in(-1,1)$, we can find a time sequence $\{t_n\}$ such that $\varphi(t_n,x_0,Z_0) = x^*$.  So, we know that for all $x^*\in (-1,1)$, $x^*\in C(x)$.

So, we have proven that $[-1,1]\subseteq C(x)$ and $(-1, 1) \subseteq C(x)$.  Since $C(x)$ must by definition be closed, $C(x) = [-1,1]$.
\end{proof}

We can study the behavior of this system numerically.  Figure 1 (left) depicts a solution calculated for the transition matrix
$$ Q_1 = \left ( \begin{array}{cc}
.4 & .6 \\
.5 & .5 \\
\end{array} \right )$$
with initial values $x_0 = 2$, $Z_0 = 1$.

As expected, the trajectory enters the interval $(-1,1)$ and stays there for all time, oscillating between $x = -1$ and $x = 1$.  Intuitively, it seems that the trajectory will cross any $x^*$ in this interval repeatedly, so that indeed $C(x) = [-1,1]$.  This is not quite so clear for the transition matrix
$$ Q_2 = \left ( \begin{array}{cc}
.1 & .9 \\
.1 & .9 \\
\end{array} \right )$$
which yields the trajectory shown in Figure 1 (right) for $x_0 = 2$, $Z_0 = 1$.

\begin{center}
\includegraphics[trim={0mm 0mm 0mm 8mm},clip,width=50mm]{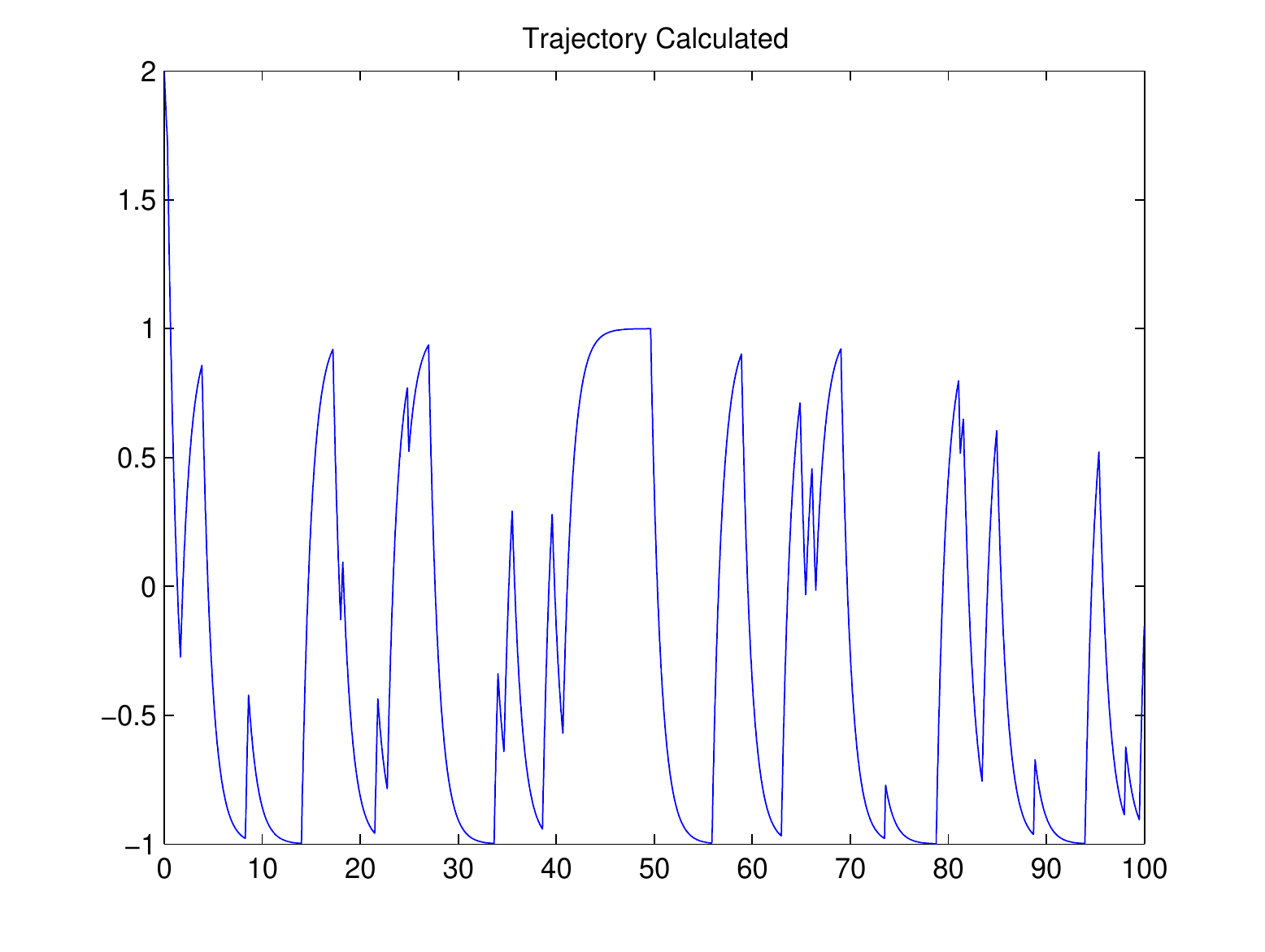}
\includegraphics[trim={0mm 0mm 0mm 8mm},clip,width=50mm]{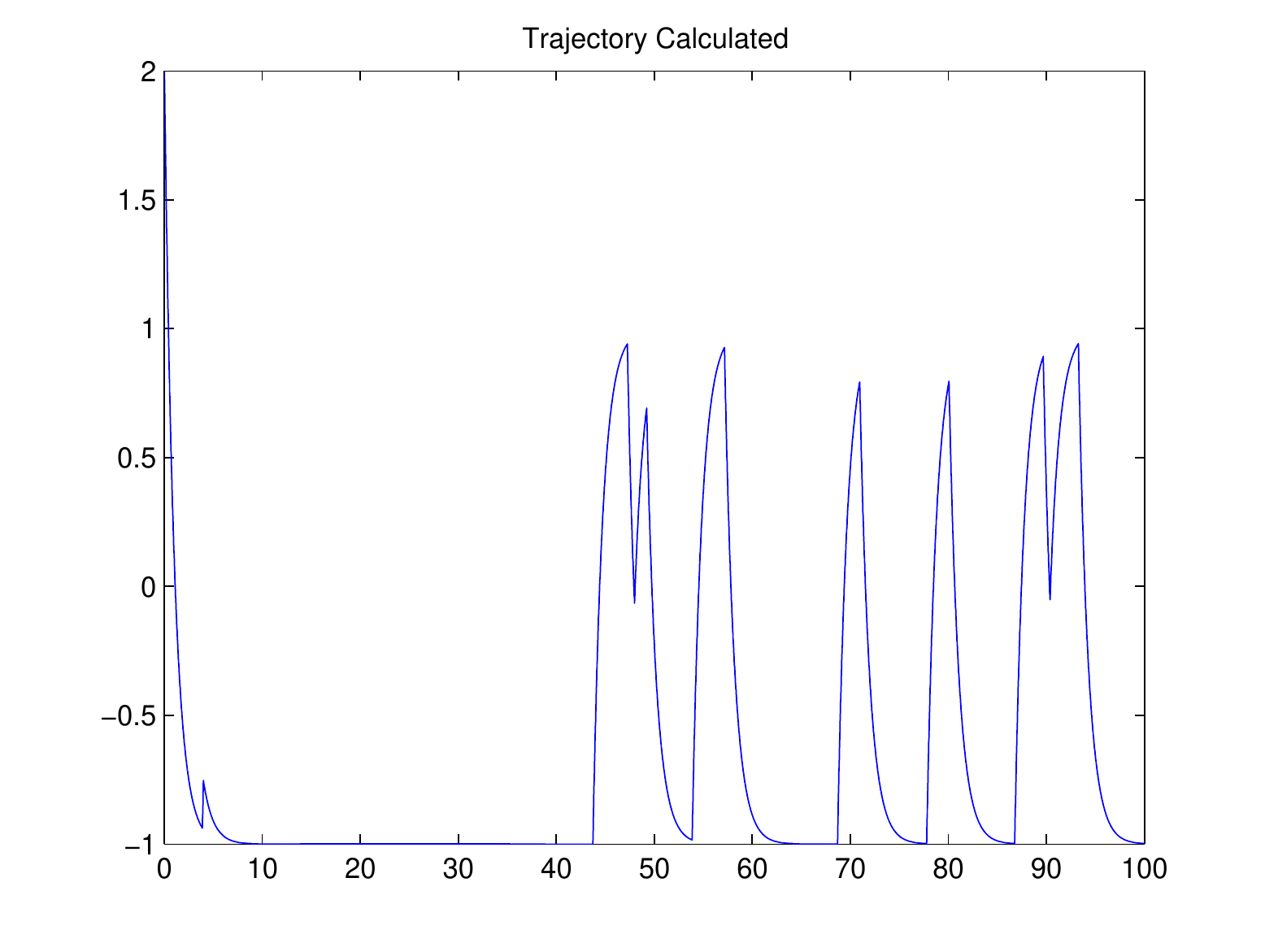}

Figure 1.  A sample trajectory for a hybrid system with transition matrix $Q_1$ (left) and $Q_2$ (right).
\end{center}

It may appear that some set of points near $x = 1$ might be crossed by our path only a finite number of times.  But, as proven above, any point in $(-1,1)$ will almost surely be reached infinitely many times as $t \rightarrow \infty$, so $C(x) = [-1,1]$.

Now, we consider the eigenvalues and eigenvectors of the transition matrices.  The eigenvector of $Q_1^T$ with eigenvalue $1$ is 
$$\vec{v} = \left ( \begin{array}{cc}
\frac{5}{11} \\
\frac{6}{11} \\
\end{array}\right ) ,$$
and the eigenvector of $Q_2^T$ with eigenvalue $1$ is 
$$\vec{v}\,' = \left ( \begin{array}{cc}
\frac{9}{10} \\
\frac{1}{10} \\
\end{array}\right ) .$$

These eigenvectors give the invariant measures on the state space $S$.  We know from Proposition \ref{inducedmeasure} that there also exists an invariant measure on $M$.  Here, since any trajectory in $M$ will almost surely enter $C(x) = [-1,1]$, the support of the invariant measure must be contained in $C(x)$.  It is not difficult to see that this invariant measure cannot be constant for all $t \in \mathbb{R}^+$.  Given any point $x_0 \in [-1,1]$, we know that at $t=1$, one of two things will have happened to the trajectory:

(i) it will have decayed exponentially toward $x=1$, if $Z_1 = 1$, or 

(ii) it will have decayed exponentially toward $x = -1$, if $Z_1 = -1$.

\noindent
In case (i), if a solution begins at $x_0=-1$ for $t=0$, then the solution will have decayed to a value of $1-(2 e^{-1})\approx 0.264$ by $t=1$ .  In case (ii) a solution beginning at $x_0 = 1$ for $t=0$ will decay to a value of $-1+2 e^{-1}\approx -0.264$.  Thus, if we are in case (i), all trajectories in $[-1,1]$ at $t = n$ will be located in $[0.264,1]$ at $t = n+1$.  If we are in case (ii), all will be in $[-1, -0.264]$.  It is not possible for any trajectory to be located in $[-0.264,0.264]$ at an integer time value.  But, clearly, some solutions will cross into this region, as depicted in Figure 2.  Therefore, no probability distribution will remain constant for all $t$ in the timeset $\mathbb{R}^+$.

\begin{center}
\includegraphics[width=100mm]{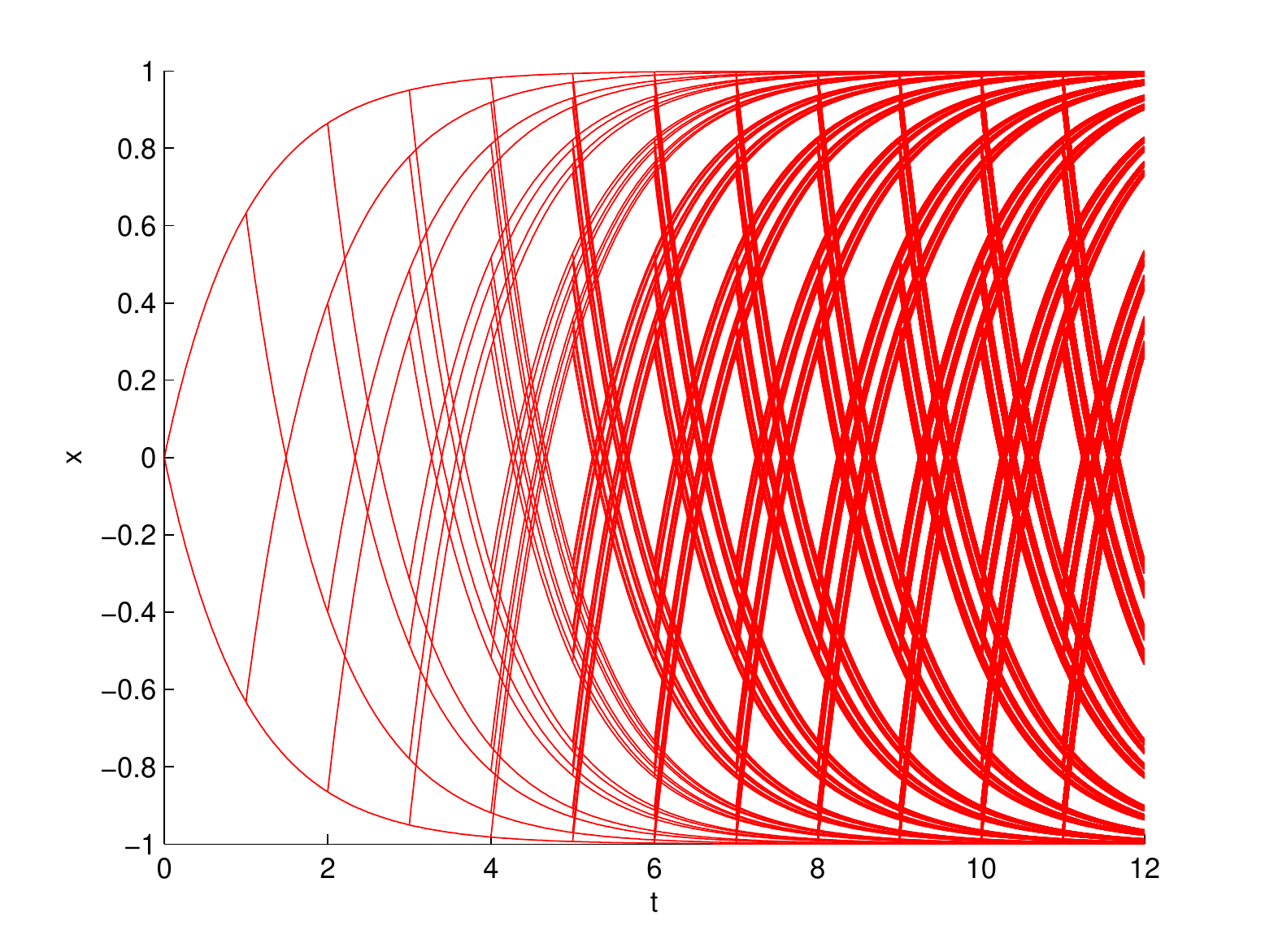}

Figure 2.  A spider plot showing all possible trajectories starting at $x_0=0$.
\end{center}

However, as Figure 2 suggests, there is some distribution that is invariant under $t \rightarrow t+n$ for $n \in \mathbb{N}$.  Approximations of the invariant measures at $t \in [0,1]$ for transition matrix $Q_1$ are shown in Figure 3. 

\begin{center}
\includegraphics[width=50mm]{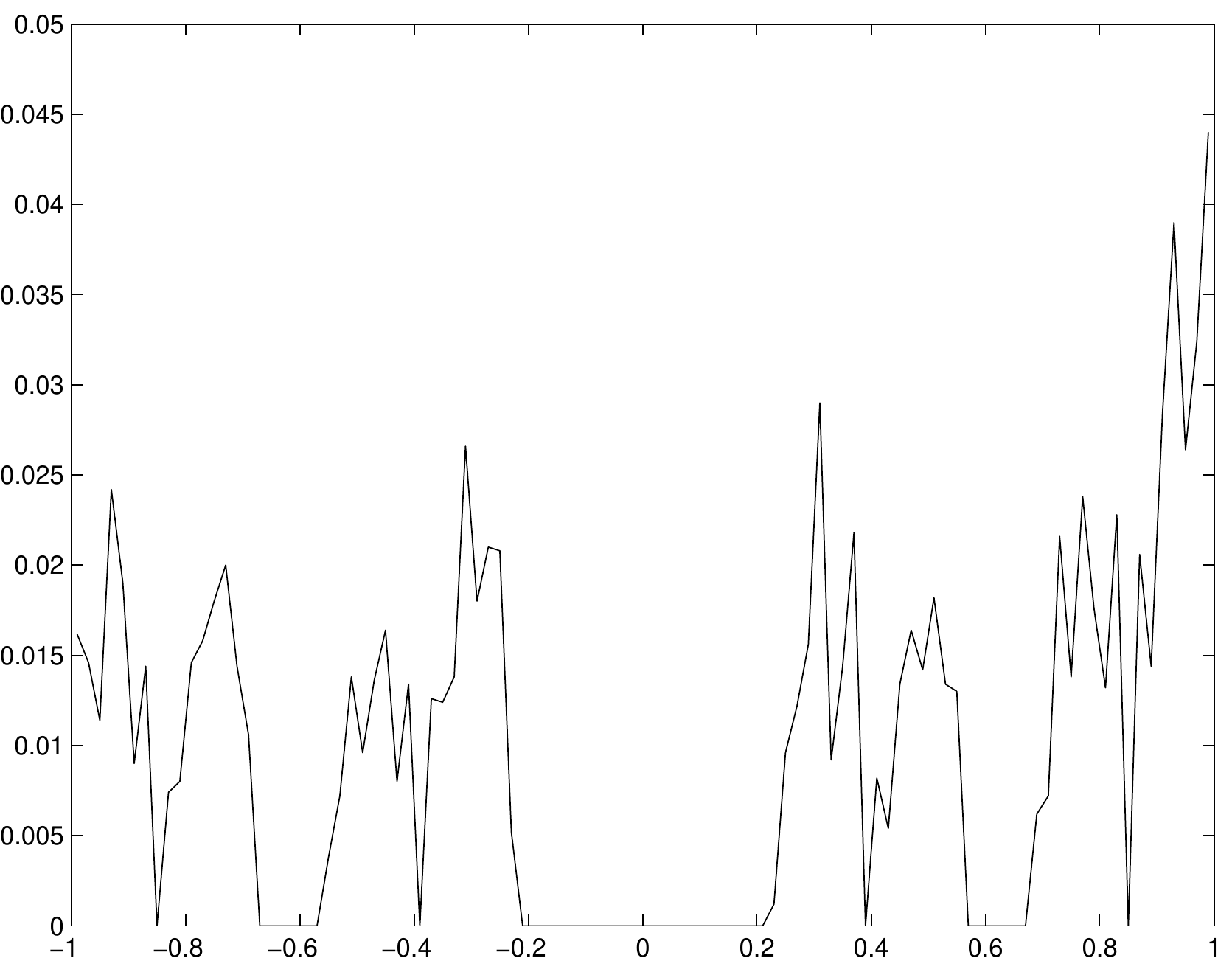}
\includegraphics[width=50mm]{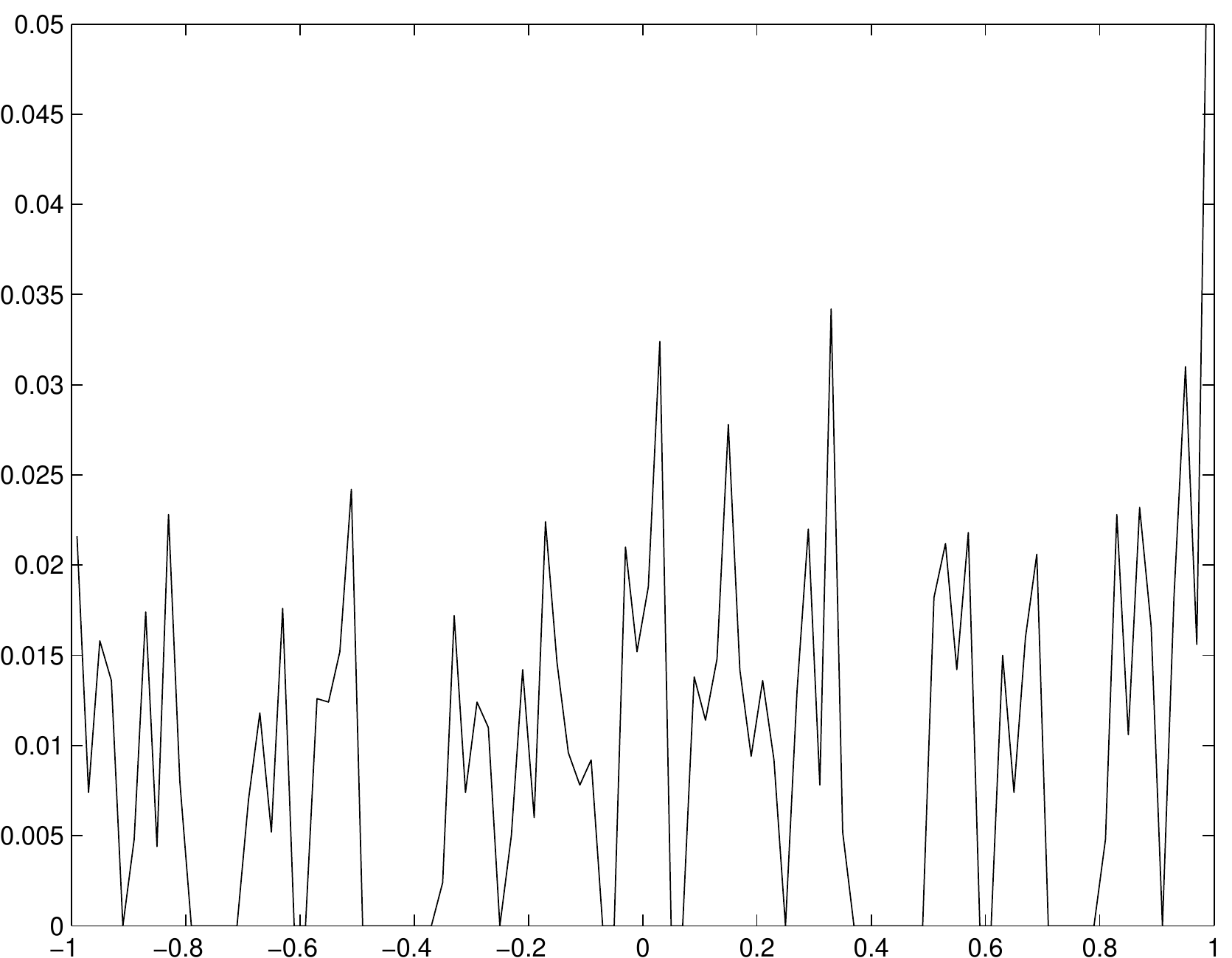} 
\includegraphics[width=50mm]{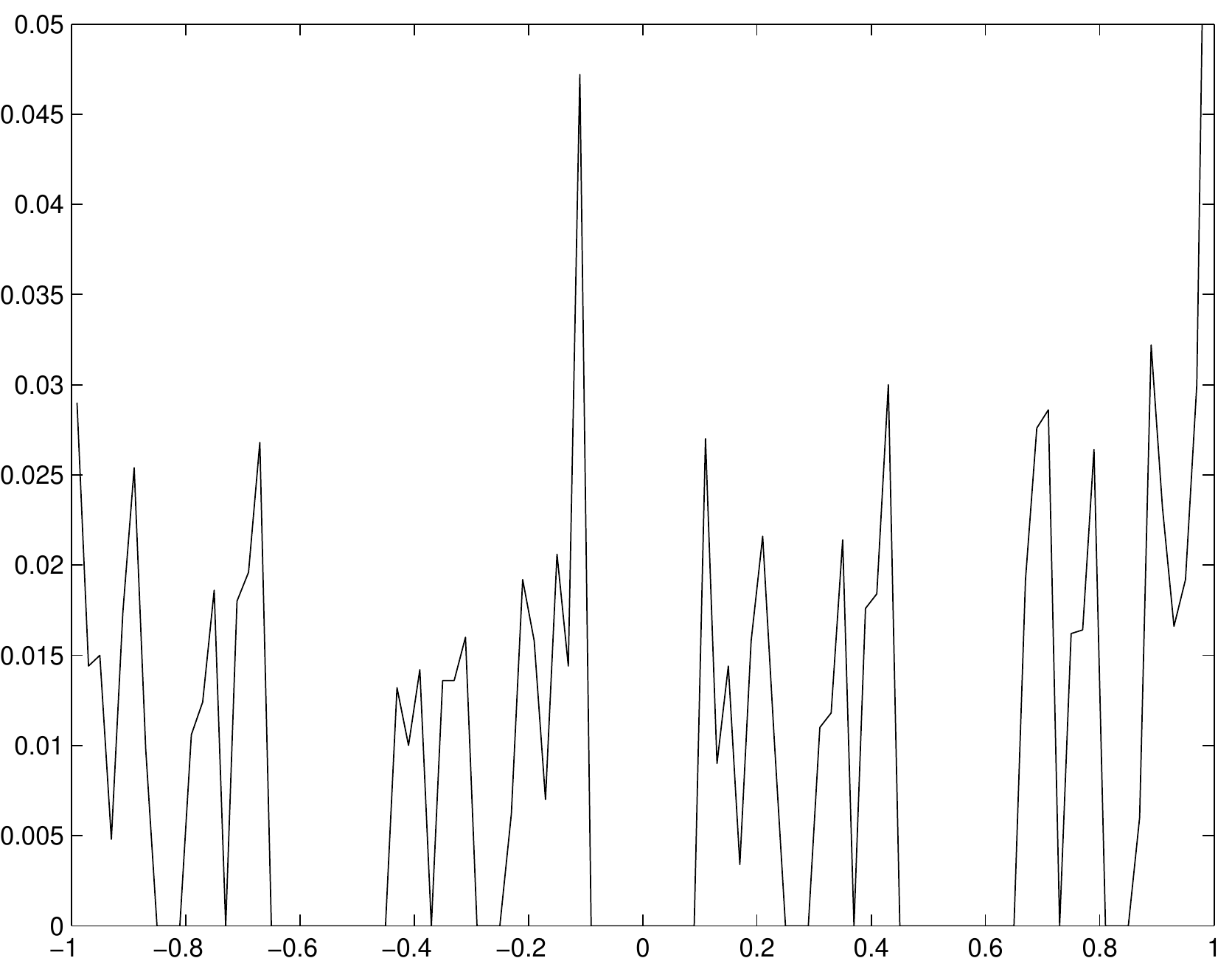} 
\includegraphics[width=50mm]{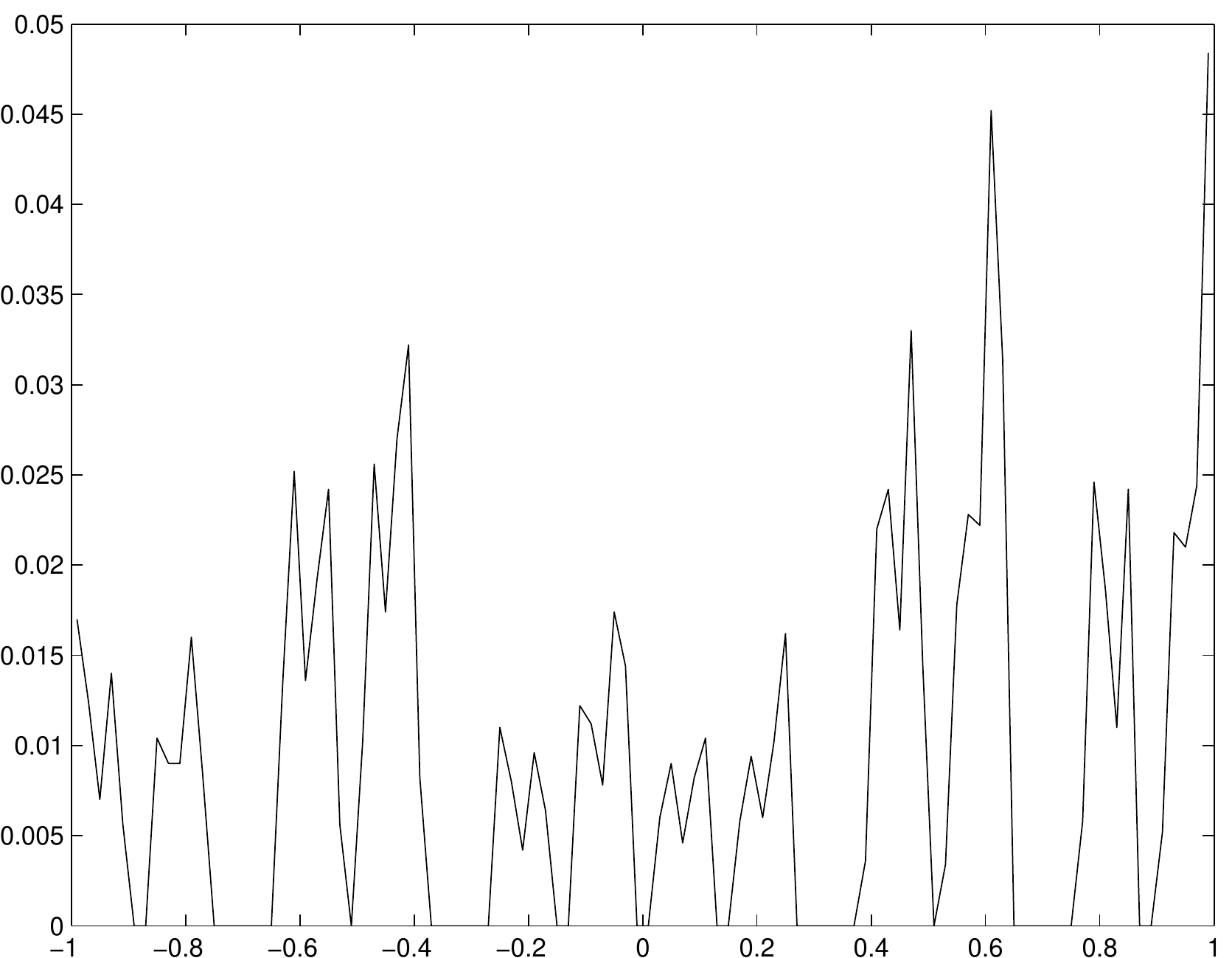} 
\includegraphics[width=50mm]{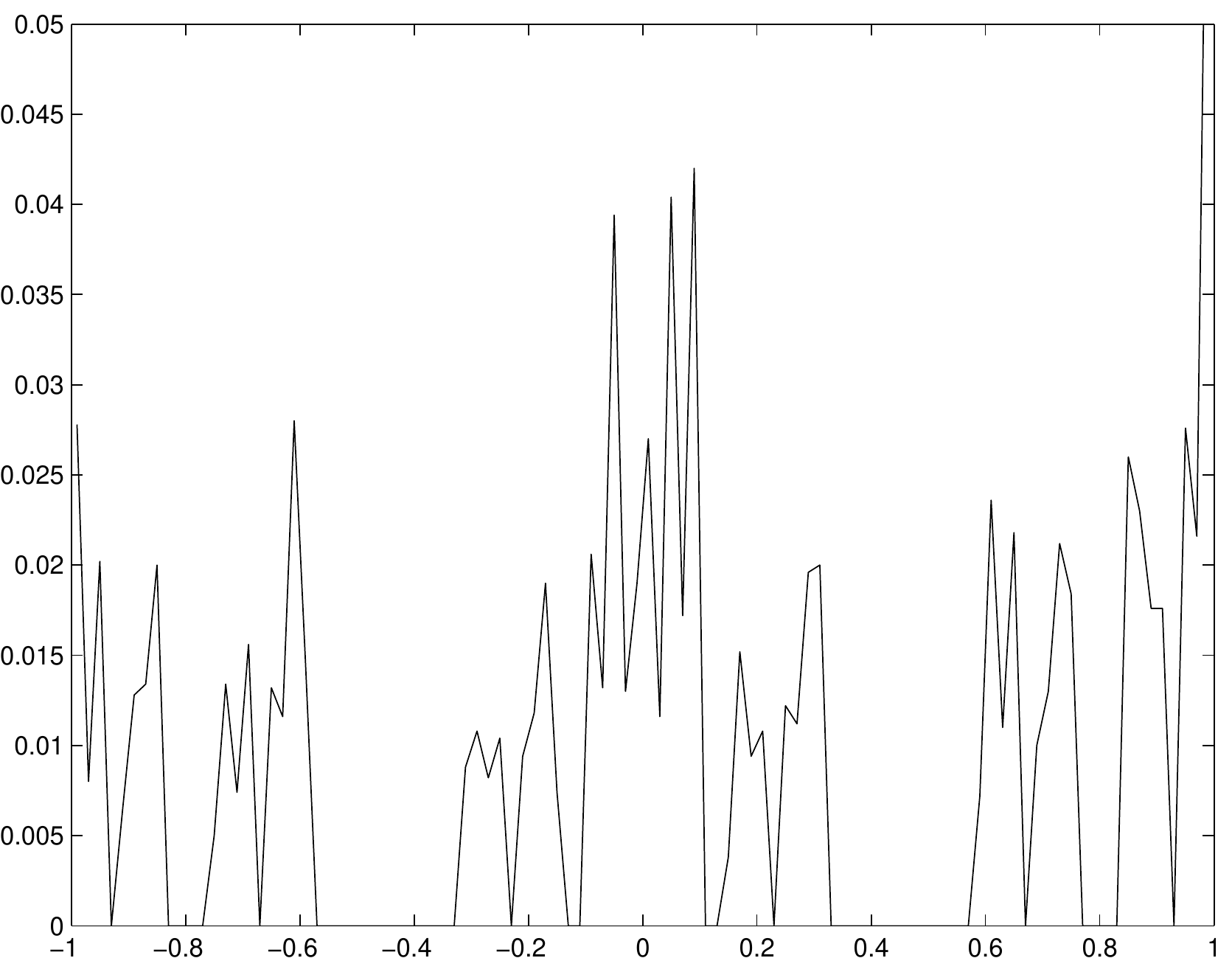} 
\includegraphics[width=50mm]{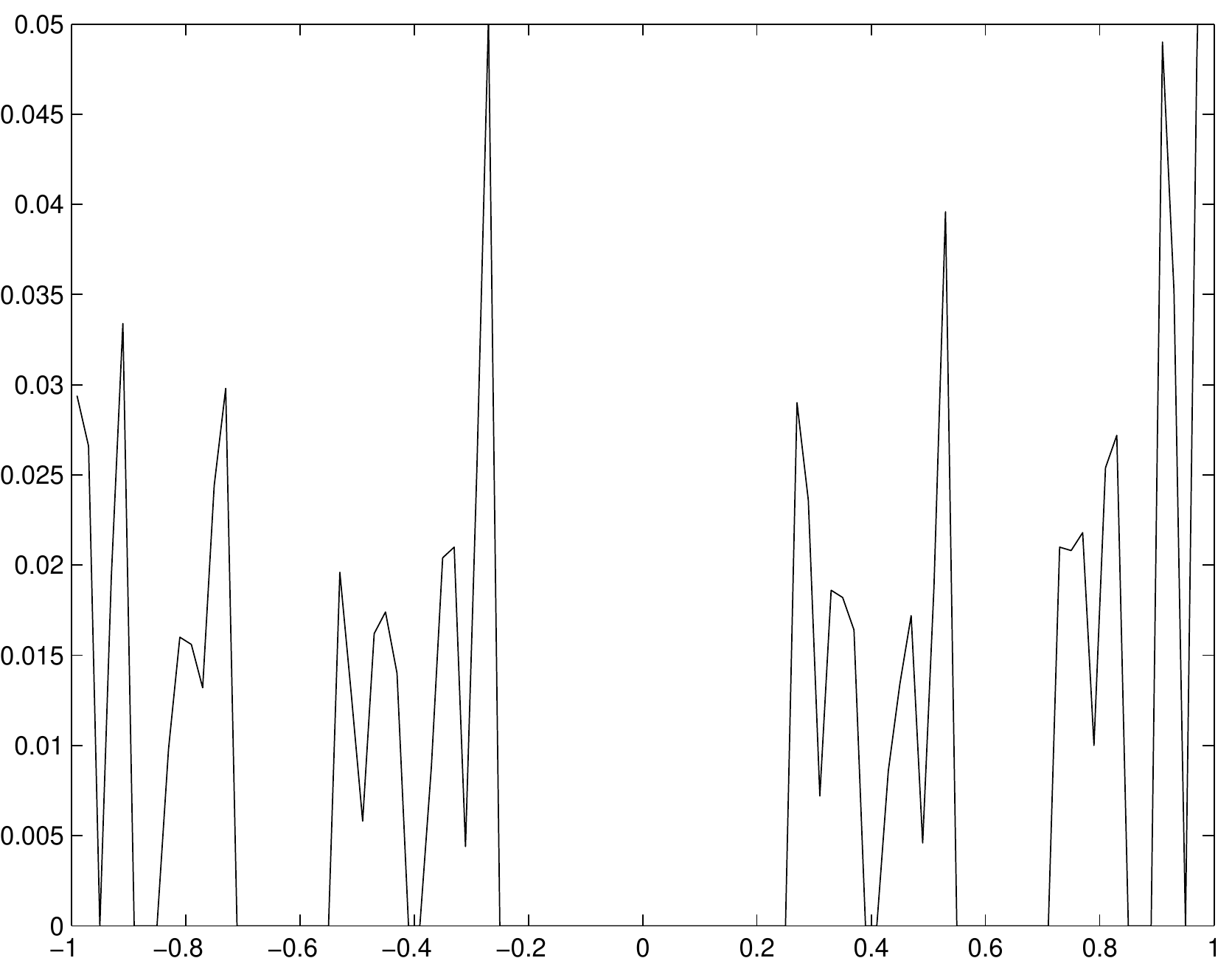} 

Figure 3.  The invariant measure $\tilde{\mu}_{0}$ for a hybrid system with transition matrix $Q_1$.
\end{center}


\subsection{A 2-D Hybrid System}

Our second example is a two-dimensional system used to model the kinetics of chemical reactors.  The general system $f(x_1,x_2)$ is given by

$$ \dot{x_1} = -\lambda x_1 - \beta (x_1 - x_c ) + BDa f(x_1,x_2)$$
$$ \dot{x_2} = -\lambda x_2+Daf(x_1,x_2) .$$
\cite{chemflow}

\noindent Here, we use the following simplified application of the system:

$$ \dot{x_1} = -x_1 - .15(x_1-1)+.35(1-x_2)e^{x_1}+Z_t(1-x_1) $$
$$ \dot{x_2} = -x_2 + .05(1-x_2)e^{x_1} .$$
This system is used to describe a Continuous Stirred Tank Reactor (CSTR). This type of reactor is used to control chemical reactions that require a continuous flow of reactants and products and are easy to control the temperature with. They are also useful for reactions that require working with two phases of chemicals.  

To understand the behavior of this system mathematically, we set our stochastic variable $Z_t=0$ and treat it as a deterministic system.  This system has three fixed points, approximately at $(.67,.09)$, $(2.64,.41)$, and $(5.90,.95)$; the former and latter are attractor points, while the middle is a saddle point, as shown in Figure 4.  The saddle point $(2.64,.41)$ creates a separatrix, a repelling equilibrium line between the two attracting fixed points.  These points, $(.67,.09)$ and $(5.90,.95)$, comprise the $\omega$-limit set of our state space.

\begin{center}
\includegraphics[trim={2cm 6cm 2.3cm 9.5cm},clip,width=100mm]{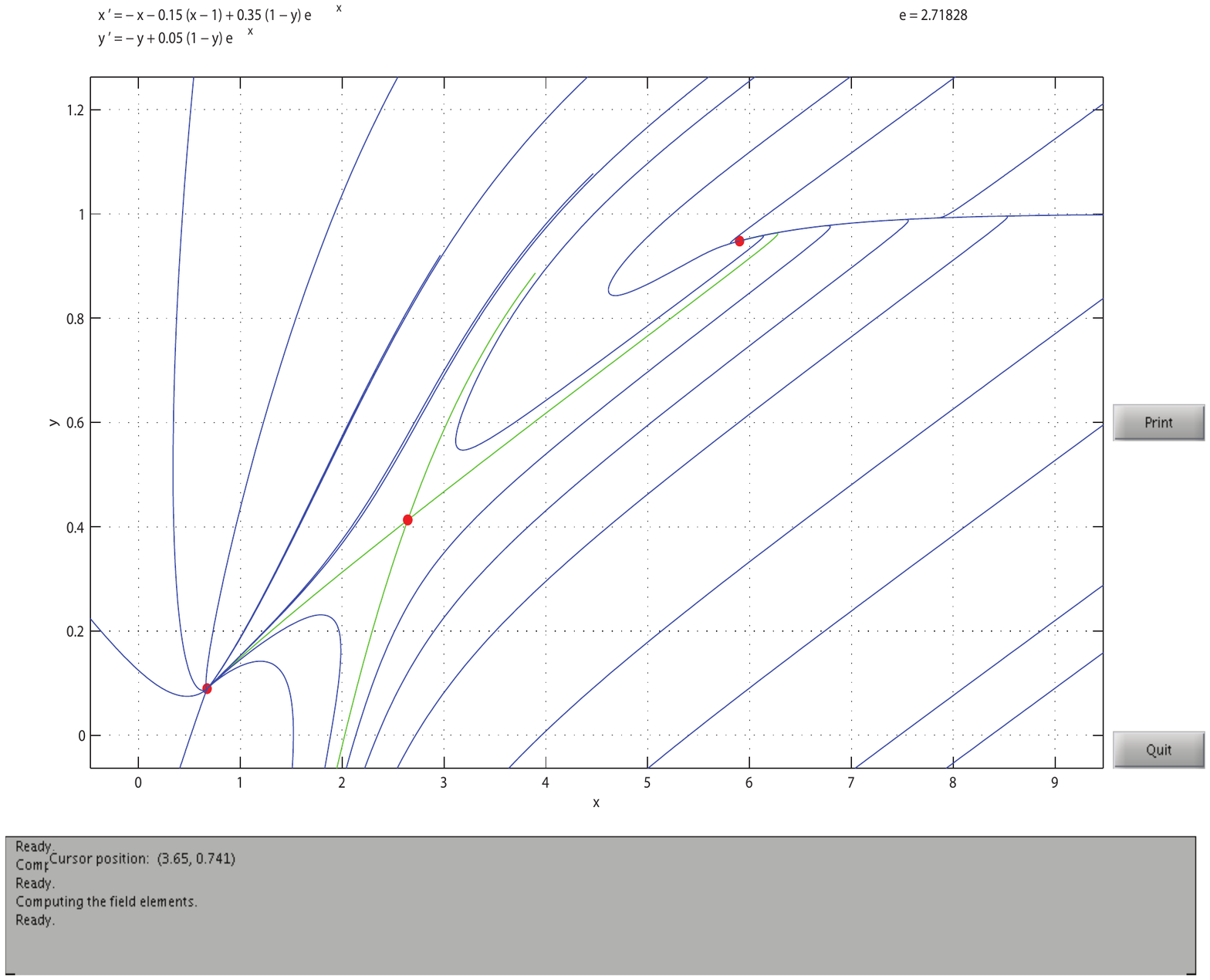}
\\Figure 4.  Phase plane of the deterministic system, $Z_n=0$.
\end{center}

With this information, we proceed to analyze the stochastic system.  As discussed above, the random variable here is $Z_t$, which in applications can take values between $-.15$ and $.15$.  To understand the full variability of this system, we take
$$Z_t \in \{-.15, 0, .15\} $$
with the transition matrix
$$\left ( \begin{array} {ccc}
.3&.3&.4 \\
.3&.3&.4 \\
.3&.3&.4 
\end{array}\right ) ,$$
yielding the phase plane in Figure 5.  We use red to indicate state 1 ($Z_t=-.15$), blue to indicate state 2 ($Z_t=0$), and green to indicate state 3 ($Z_t=.15$) for the corresponding fixed points, separatrices, and portions of trajectories.

\begin{center}
\includegraphics[width=100mm]{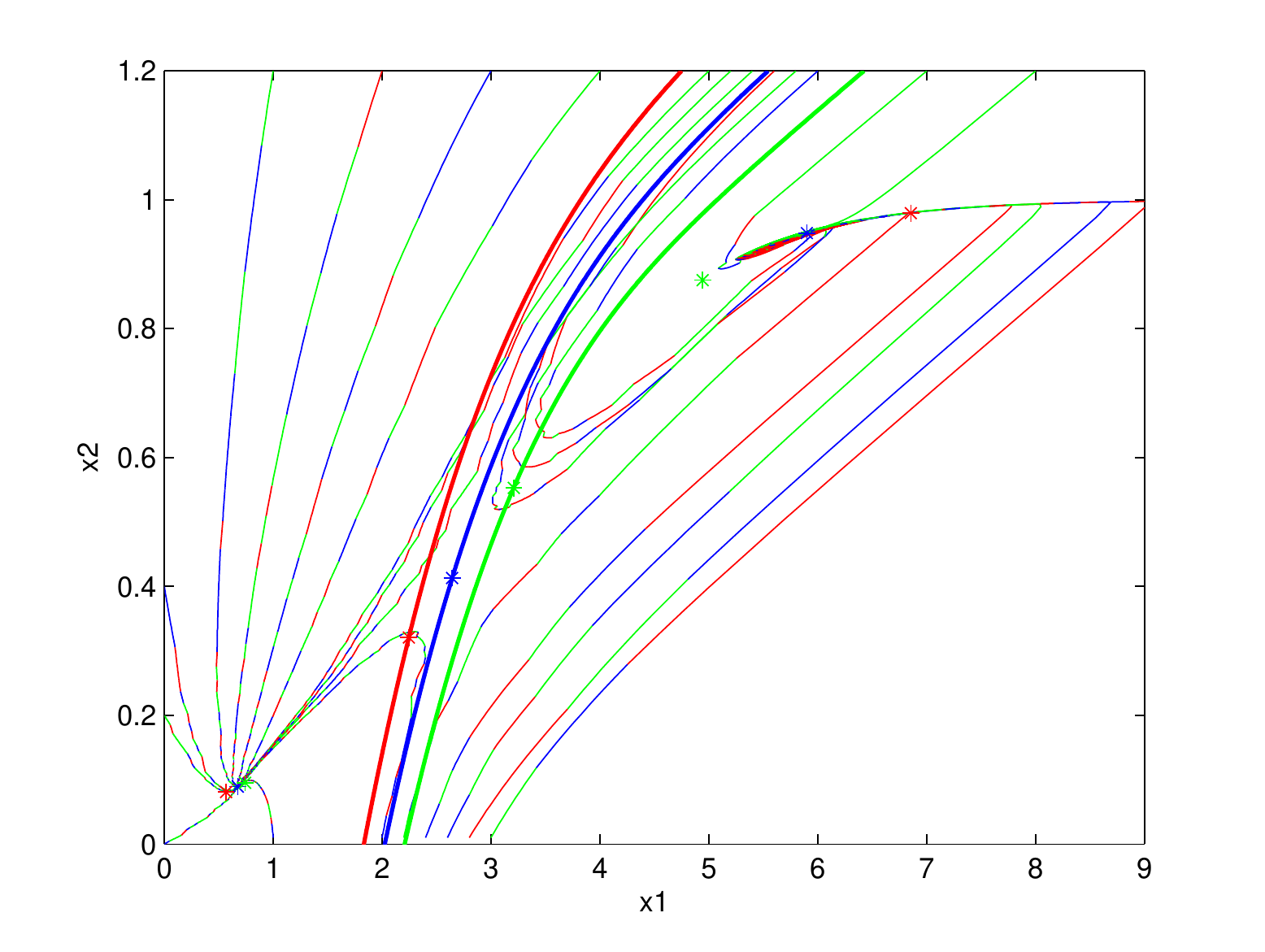}
\\ Figure 5.  Phase plane with randomness. 
\end{center}

We see that, for $x_0$ away from the separatrices, $\varphi(t,x_0,Z_0)$ behaves similarly to $\varphi(t,x_0)$.  Although state changes create some variability in a given trajectory, these paths move toward the groups of associated attracting fixed points, which define the stochastic limit sets for this system.  However, $\varphi(t,x_0,Z_0)$ for $x_0$ between the red and green separatrices is unpredictable; depending on the sequence of state changes for a given trajectory, it might move either to the right or the left of the region defined by the separatrices.  This area is the \textit{bistable region}, because a trajectory beginning within it has two separate stochastic limit sets.

For example, we have in Figure 6 a spider plot beginning in the bistable region at $(3.5,0.75)$.  A spider plot shows all possible trajectories starting from a single point in a hybrid system by, at each time step, taking every possible state.  Our previous coloring scheme still applies.

\begin{center} 
\includegraphics[width=100mm]{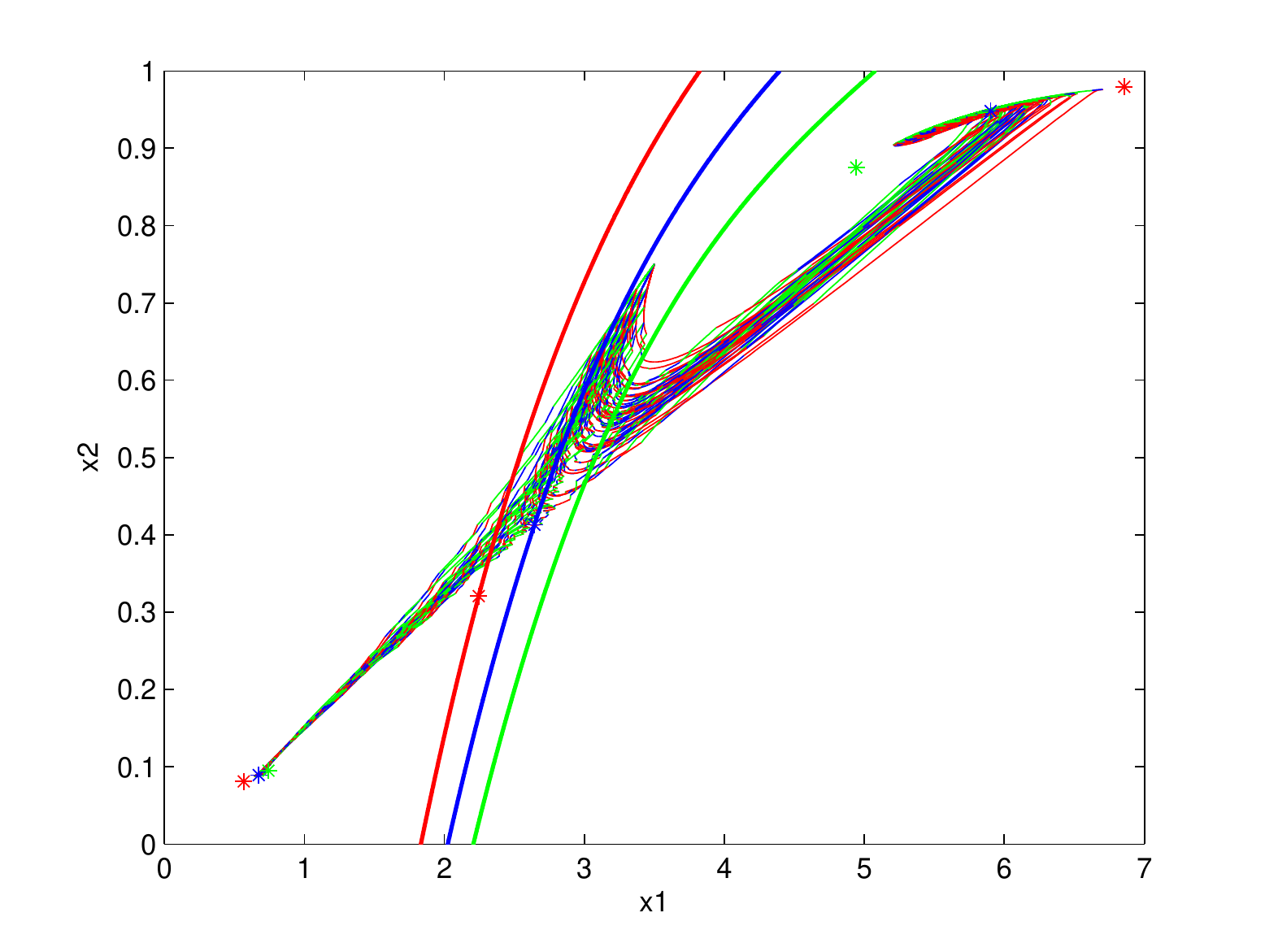}
\\ Figure 6.  Spider plot.
\end{center}

Thus, we see that the introduction of a stochastic element to a deterministic system can grossly affect the outcome of the system, as a trajectory can now cross any of the separatrices by being in a different state.  

The stochastic element also affects the behavior of the hybrid system around the invariant region.  In Figure 7, we show the path of a single trajectory in the invariant region defined by the fixed points near $(.67,0.9)$.  Plotting this trajectory for a long period of time approximates the invariant region that would appear if we ran a spider plot from the same point, but much more clearly.

\begin{center} 
\includegraphics[width=100mm]{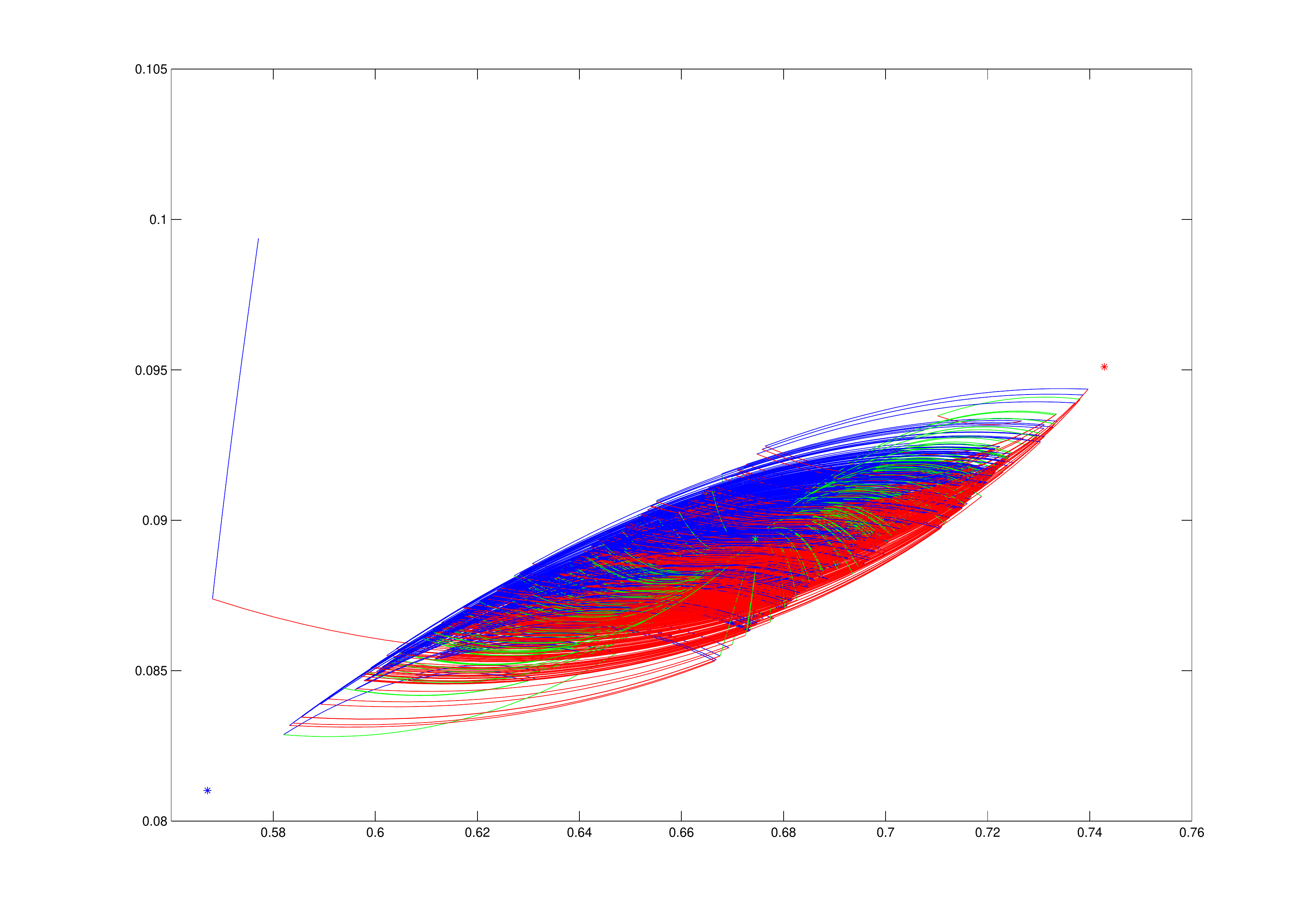}
\\ Figure 7.  Random trajectory.
\end{center}

As we saw in the $1$-dimensional system, considering the counts taken at specific times in the interval between two state changes, $h=1$ (since our state transitions occur on $\mathbb{N}$), yields a periodic set of invariant measures.  Similarly to Figure 3, Figure 8 shows the positions of our random trajectory in the invariant region at time $t$, mod $h$.

\begin{center} 
\includegraphics[width=50mm]{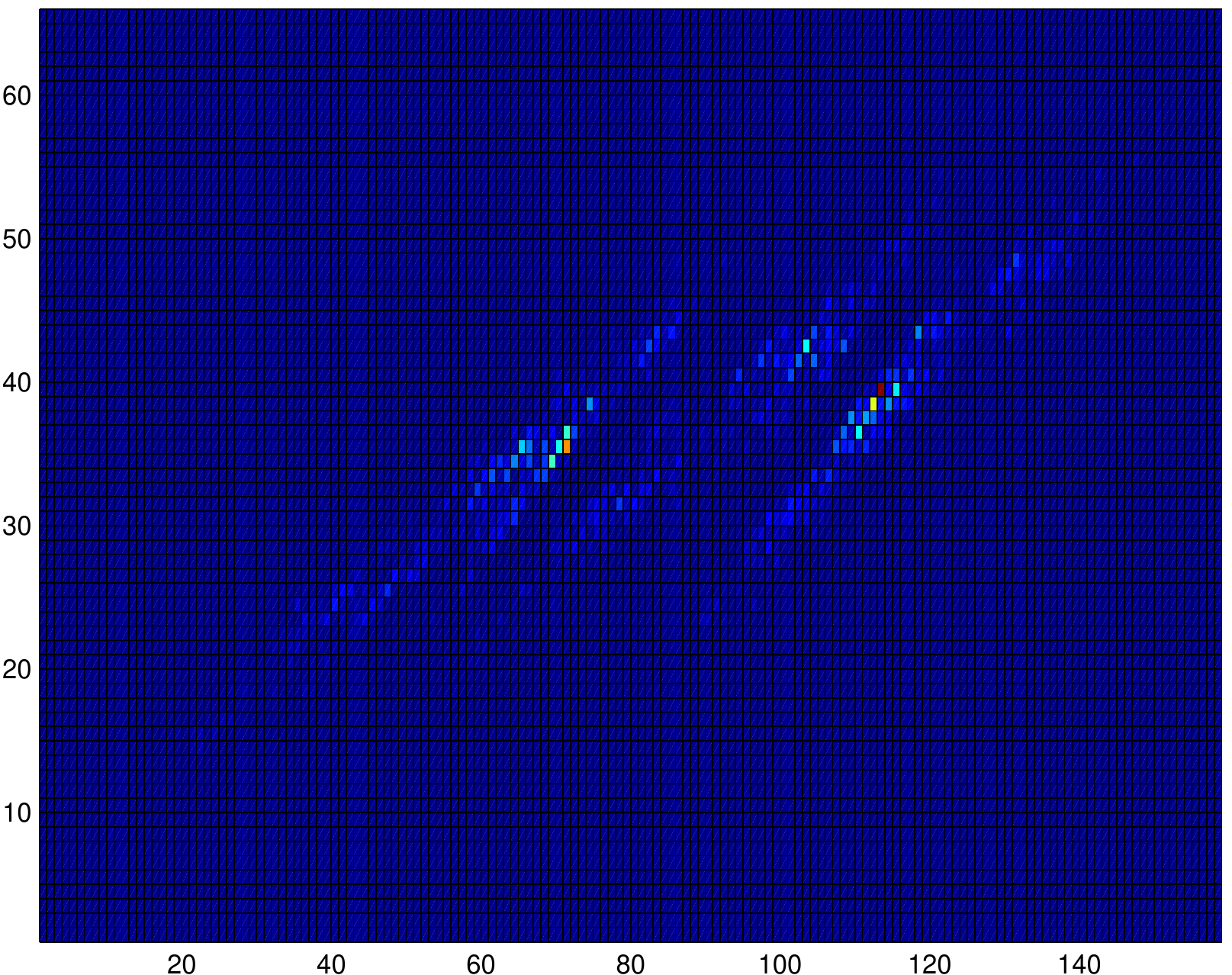}
\includegraphics[width=50mm]{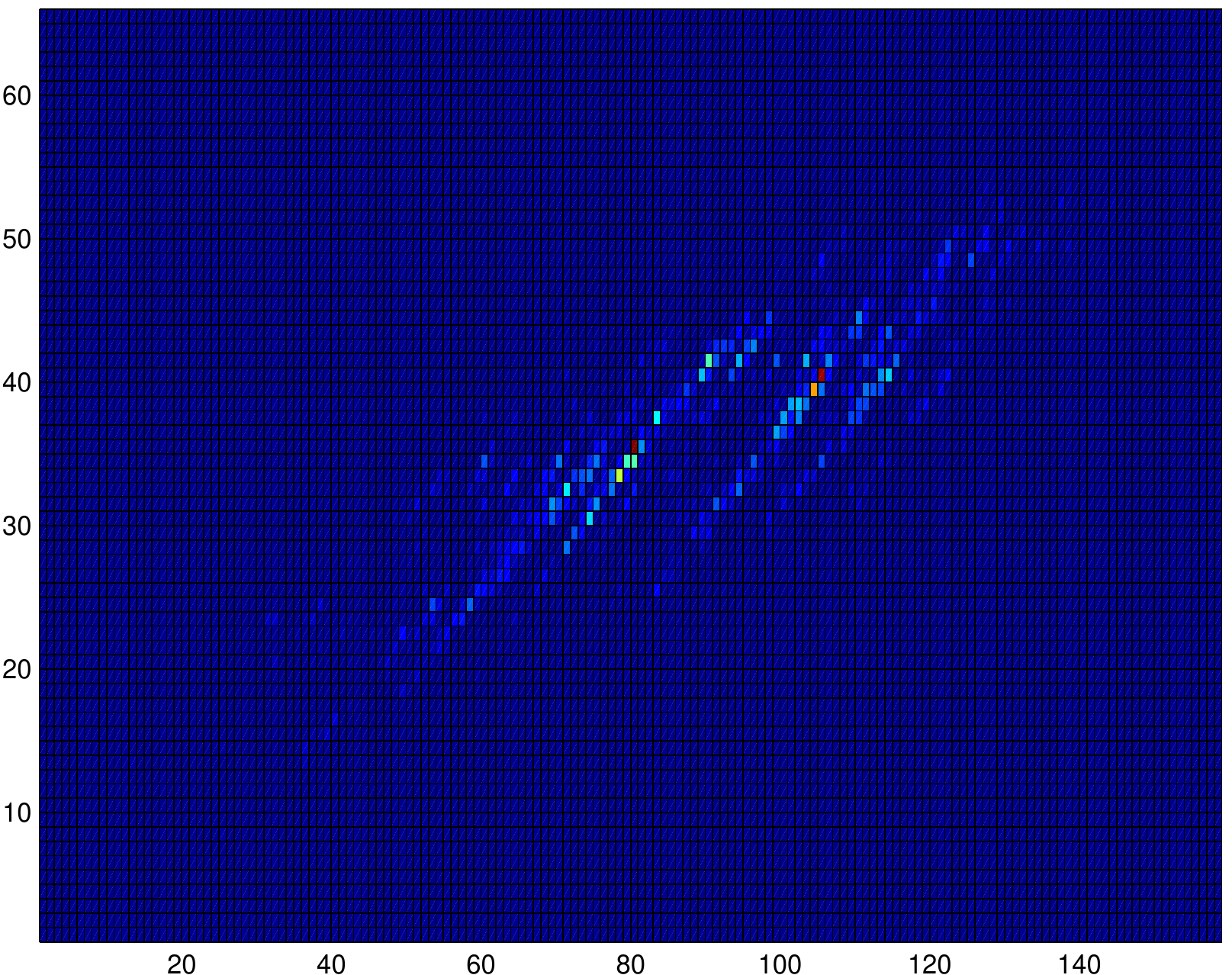}
\includegraphics[width=50mm]{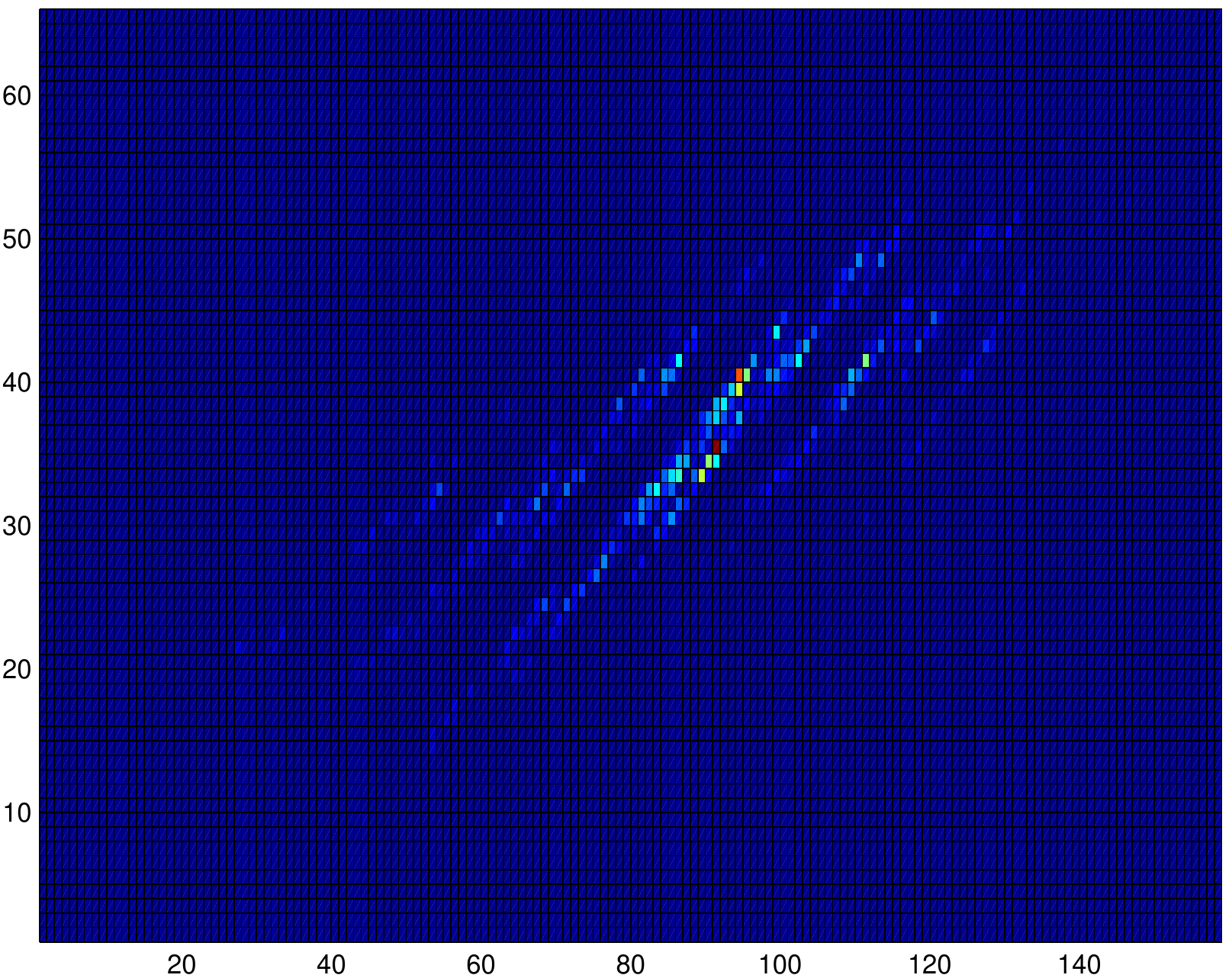}
\includegraphics[width=50mm]{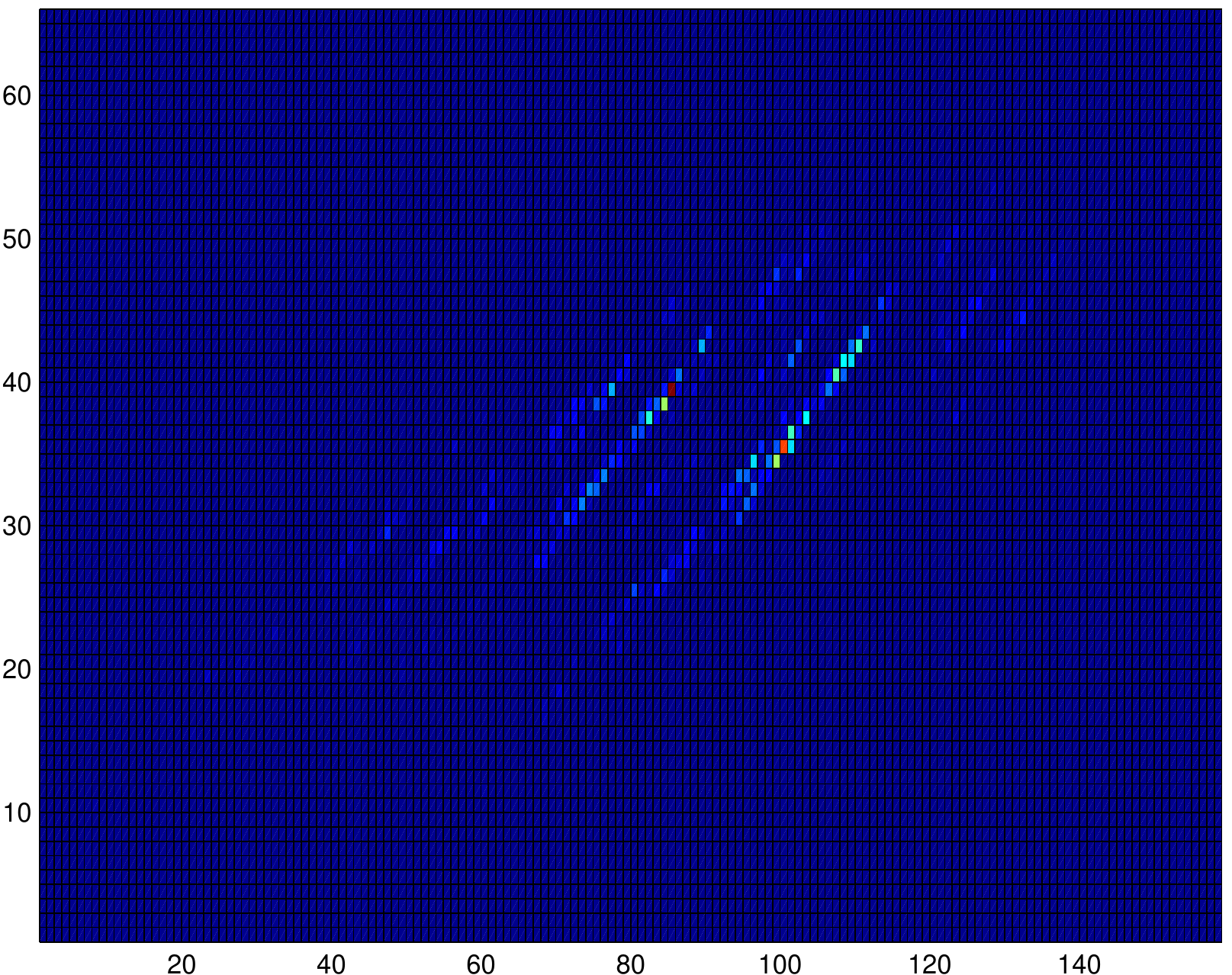}
\includegraphics[width=50mm]{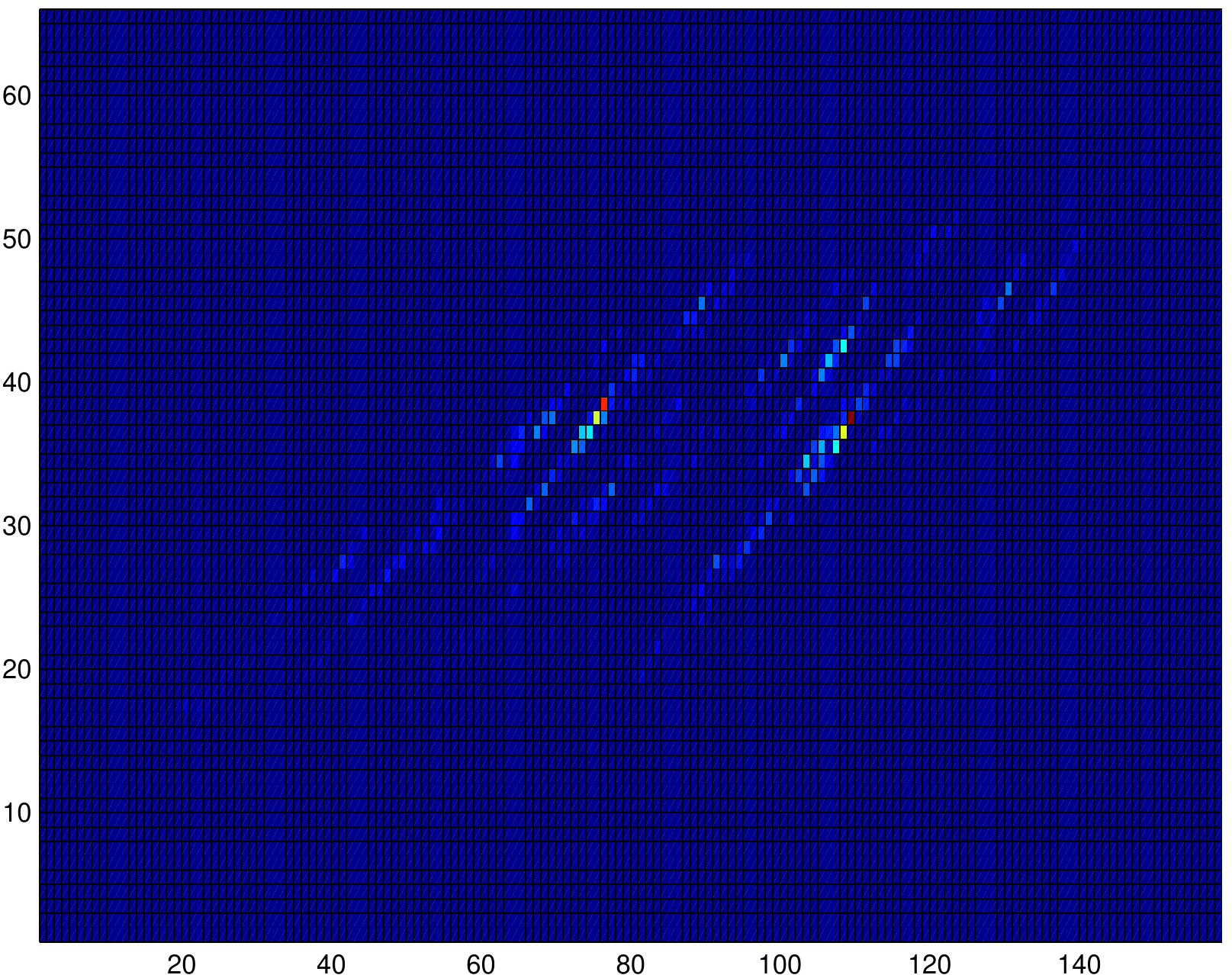}
\includegraphics[width=50mm]{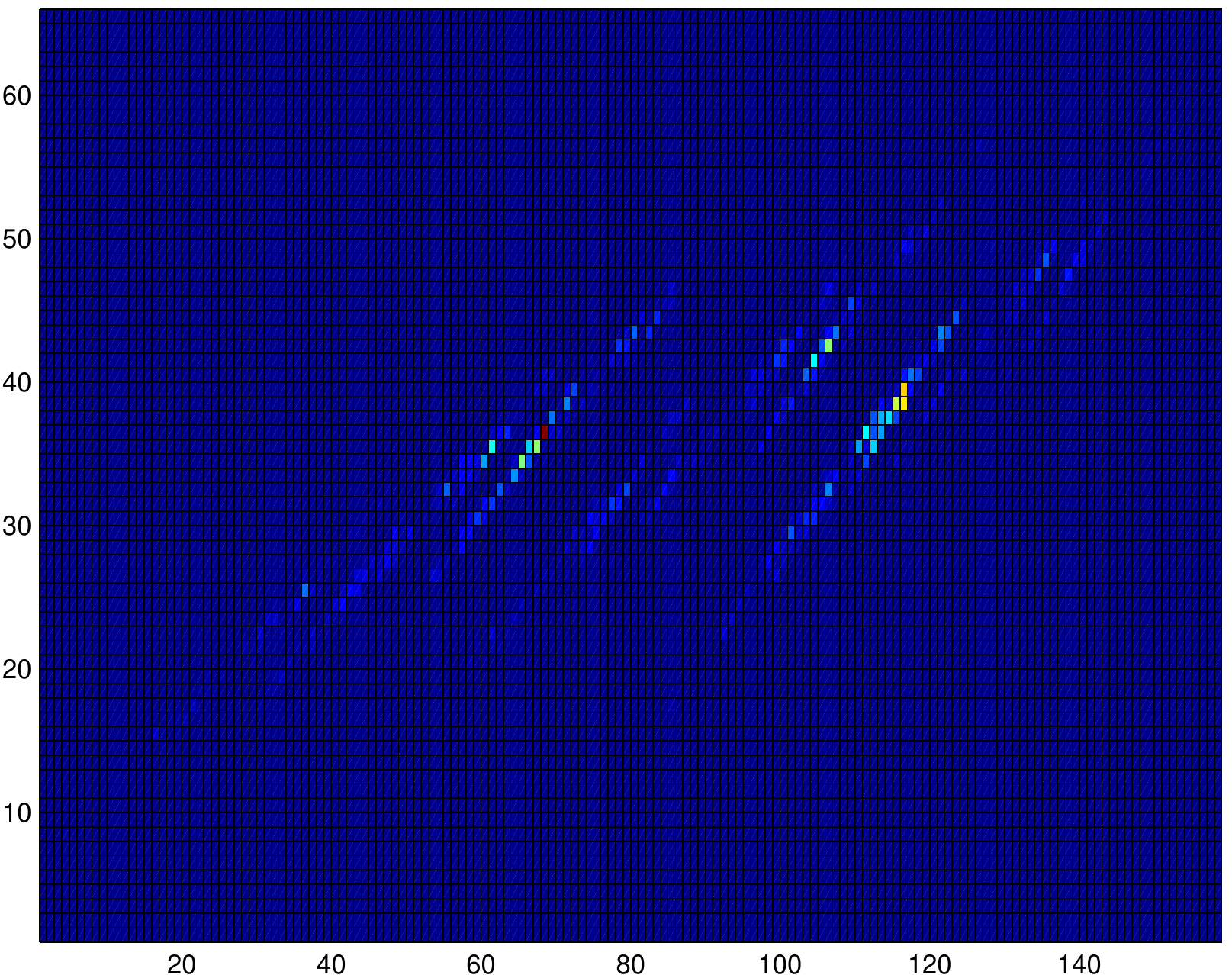}
\\ Figure 8.  Count of trajectory paths within one timestep.
\end{center}

A denser series of count images would show more clearly that the invariant measure at $t$ mod $h$ cycles continuously.


\section{Conclusion}

We have studied hybrid systems consisting of a finite set $S$ of dynamical systems over a compact space $M$ with a Markov chain on $S$ acting at discrete time intervals.  Such a hybrid system is a Markov process, which can be made time-homogeneous by discretizing the system.  Then, there exists a family of invariant measures on the product space $M \times S$, which can be projected onto a family of measures on $M$.  We have demonstrated a relation between the members of this family.

We have studied both a one-dimensional and a two-dimensional example of a hybrid system.  These examples provide insight into the stochastic equivalent of $\omega$-limit sets and yield graphical representations of the invariant measures on these sets.


\section{Acknowledgements}

We wish to recognize Kimberly Ayers for her helpful discussions and Professor Wolfgang Kliemann for his instruction and guidance.  We would like to thank the Department of Mathematics at Iowa State University for their hospitality during the completion of this work.  In addition, we would like to thank Iowa State University, Alliance, and the National Science Foundation for their support of this research.  Figure 4 was drawn using the `pplane8.m' MATLAB program.



\begin{thebibliography}{9}

\bibitem{discretesystems}
Ackerman, J., K. Ayers, E. J. Beltran, J. Bonet, and D. Lu.
\emph{A Behavioral Characterization of Discrete Time Dynamical Systems using Directed Graphs}.
Iowa State University, Ames,  
2011.

\bibitem{morsedecomps}
Ayala-Hoffmann, J., P. Corbin, K. McConville, F. Colonius, W. Kliemann, J. Peters.
\emph{Morse Decompositions, Attractors and Chain Recurrence}.
Iowa State University, Ames,
2007.

\bibitem{fitzhughnagurno}
Ayers, K.D.
\emph{Stochastic Perturbations of the Fitzhugh-Nagurno Equations}.
Bowdoin College, Maine,
2010.

\bibitem{climatemodel}
Baldwin, M.C.
\emph{Stochastic Analysis of Marotzke and Stone climate model}.
Iowa State University, Ames,
2007.

\bibitem{Hairer}
Hairer, M.
\emph{Convergence of Markov Processes}.
University of Warwick, Coventry, UK,
2010.

\bibitem{Ergodic}
Hairer, M.
\emph{Ergodic Properties of Markov Processes}.
University of Warwick, Coventry, UK, 2006.

\bibitem{chemflow}
Poore, A. B. 
\emph{A Model Equation Arising from Chemical Reactor Theory}.
Arch. Rational Mech. Anal. 52 (1974), pp 358-388.

\bibitem{fellerjump}
Van Zanten, H.
\emph{An Introduction to Stochastic Processes in Continuous Time}.
Eindhoven University of Technology, Centrum, NL, 2010.

\end{thebibliography}
\end{document}